\def\includegraphics{}
\newtheorem{theorem}[subsection]{Theorem}
\newtheorem{coro}[subsection]{Corollary}
\theoremstyle{remark}
\newtheorem*{remark}{Remark}
\newtheorem*{defn}{Definition}
\newcommand{\rra}{\rightarrow}
\newcommand{\inv}{^{-1}}
\newcommand{\brk}[1]{ \left\lbrace #1 \right\rbrace }
\newcommand{\pwr}[1]{\left( #1 \right)}
\DeclareMathOperator{\Real}{Re}
\begin{document}

\begin{frontmatter}

\begin{fmbox}
\dochead{Research}


\title{Partition zeta functions}


\author[
   addressref={aff1},                   
   email={robert.schneider@emory.edu}   
]{\inits{RS}\fnm{Robert} \snm{Schneider}}


\address[id=aff1]{
  \orgname{Department of Mathematics and Computer Science, Emory University}, 
  \city{Atlanta, Georgia 30322},                              
  \cny{USA}                                    
}



\end{fmbox}


\begin{abstractbox}

\begin{abstract} 
We exploit transformations relating generalized $q$-series, infinite products, sums over integer partitions, and continued fractions, to find partition-theoretic formulas to compute the values of constants such as $\pi$, and to connect sums over partitions to the Riemann zeta function, multiple zeta values, and other number-theoretic objects.
\end{abstract}


\begin{keyword}
\kwd{partitions}
\kwd{$q$-series}
\kwd{zeta functions}
\end{keyword}


\end{abstractbox}
%

\end{frontmatter}



\section{Introduction, notations and central theorem}
One marvels at the degree to which our contemporary understanding of $q$-series, integer partitions, and what is now known as the Riemann zeta function $\zeta (s)$ emerged nearly fully-formed from Euler's pioneering work \cite{andrews1998theory, dunham1999euler}. Euler discovered the magical-seeming generating function for the partition function $p(n)$
\begin{equation}\label{1}
\frac{1}{(q;q)_{\infty}} = \sum_{n=0}^{\infty}p(n)q^n,
\end{equation}
in which the $q$-Pochhammer symbol is defined as $(z;q)_0:=1, (z;q)_n := \prod_{k=0}^{n-1}(1-zq^k)$ for $n\geq 1$, and $(z;q)_{\infty} = \lim_{n\rra \infty}(z;q)_n$ if the product converges, where we take $z \in \mathbb C$ and $q := e^{i2\pi \tau}$ with $\tau \in \mathbb H$ (the upper half-plane). He also discovered the beautiful product formula relating the zeta function $\zeta(s)$ to the set $\mathbb P$ of primes
\begin{equation}\label{2}
\frac{1}{\prod_{p\in \mathbb P}\pwr{1 - \frac{1}{p^s}}} = \sum_{n=1}^{\infty} \frac{1}{n^s} := \zeta(s), \hspace{.3em} \Real(s) >1.
\end{equation}
In this paper, we see \eqref{1} and \eqref{2} are special cases of a single partition-theoretic formula. Euler used another product identity for the sine function
\begin{equation}\label{3}
x\prod_{n=1}^{\infty} \pwr{1 - \frac{x^2}{\pi^2n^2}} = \sin x
\end{equation}
to solve the so-called Basel problem, finding the exact value of $\zeta (2)$; he went on to find an exact formula for $\zeta (2k)$ for every $k \in \mathbb Z^+$ \cite{dunham1999euler}. Euler's approach to these problems, interweaving infinite products, infinite sums and special functions, permeates number theory.

Very much in the spirit of Euler, here we consider certain series of the form $\sum_{\lambda \in \mathcal P}\phi(\lambda)$, where the sum is taken over the set $\mathcal P$ of integer partitions $\lambda = (\lambda_1,\lambda_2,\dots, \lambda_r), \lambda_1 \geq \lambda_2 \geq \cdots \geq \lambda_r \geq 1$, as well as the ``empty partition" $\emptyset$, and where $\phi : \mathcal P \rra \mathbb C$. We might sum instead over a subset of $\mathcal P$, and will intend $\mathcal P_X \subseteq \mathcal P$ to mean the set of partitions whose parts all lie in $X \subseteq \mathbb Z^+$. 

A few other notations should be fixed and comments made. We call the number of parts of $\lambda$ the \textit{length} $\ell(\lambda) := r$ of the partition. We call the number being partitioned the \textit{size} $|\lambda|:= \lambda_1 + \lambda_2 + \cdots + \lambda_r$ of $\lambda$ (also referred to as the \textit{weight} of the partition). We write $\lambda \vdash n$ to indicate $\lambda$ is a partition of $n$ (i.e., $|\lambda| = n$), and we allow a slight abuse of notation to let ``$\lambda_i \in \lambda$" indicate $\lambda_i$ is one of the parts of $\lambda$ (with multiplicity). Furthermore, for formal transparency, we define the natural number $n_{\lambda}$, which we call the \textit{integer} of $\lambda$, to be the product of its parts, i.e.,
$$n_{\lambda}:=\lambda_1\lambda_2\cdots \lambda_r.$$ We assume the conventions $\ell(\emptyset) := 0, |\emptyset|:= 0$, and $n_{\emptyset} := 1$ (being an empty product), and take $s \in \mathbb C$, $\Real (s) >1$ and $|q| < 1$ throughout, unless otherwise specified. Proofs are postponed until Section \ref{Proofs of theorems and corollaries}.

Sums of the form $\sum_{\lambda}\phi(\lambda)$ obey many interesting transformations, and often reveal patterns that are otherwise obscure, much as with sums over natural numbers. MacMahon appears to be the first to have considered such summations explicitly, looking at sums over partitions of a given positive integer in \cite{macmahon1984combinatory}. Fine gives a variety of beautiful results and techniques related to sums over partitions in \cite{fine1988basic}, as do Andrews \cite{andrews1998theory}, Alladi \cite{alladi1997partition}, and other authors. More recent work of Bloch-Okounkov \cite{bloch2000character} and Zagier \cite{zagier2015partitions} relates sums over partitions to infinite families of quasimodular forms via the $q$-bracket operator, and Griffin-Ono-Warnaar \cite{griffin2014framework} use partition sums involving Hall-Littlewood polynomials to produce modular functions. These series have deep connections. It is natural then to wonder, in what other ways might sums over partitions connect to classical number-theoretic objects?

We need to introduce one more notation, in order to state the central theorem. Define $\varphi_n(f;q)$ by $\varphi_0(f;q) := 1$ and 
\begin{equation*}
\varphi_n(f;q) := \prod_{k=1}^n (1-f(k)q^k)
\end{equation*}
where $n\geq 1$, for an arbitrary function $f : \mathbb N \rra \mathbb C$. When the infinite product converges, let $\varphi_{\infty}(f;q):= \lim_{n\rra \infty}\varphi_n(f;q)$. We think of $\varphi$ as a generalization of the $q$-Pochhammer symbol. Note that if we set $f$ equal to a constant $z$, then $\varphi$ does specialize to the $q$-Pochhammer symbol, as $\varphi_n(z;q) = (zq;q)_n $ and $\varphi_{\infty}(z;q) = (zq;q)_{\infty}$. 

As in \eqref{1} and \eqref{2}, it is the reciprocal $1/\varphi_{\infty}(f;q)$ that interests us. With the above notations, we have the following system of identities.

\begin{theorem}\label{1.1}
If the product converges, then $1/\varphi_{\infty}(f;q) = \prod_{n=1}^{\infty}(1-f(n)q^n)^{-1}$ may be expressed in a number of equivalent forms, viz.
\begin{align}
\frac{1}{\varphi_{\infty}(f;q)} & =  \sum_{\lambda \in \mathcal P} q^{|\lambda|} \prod_{\lambda_i \in \lambda} f(\lambda_i) \label{4}\\ \displaybreak
& =  1+ \sum_{n=1}^{\infty} q^n \frac{f(n)}{\varphi_n(f;q)} \label{5}\\ 
& =  1+ \frac{1}{\varphi_{\infty}(f;q)} \sum_{n=1}^{\infty} q^n f(n)\varphi_{n-1}(f;q) \label{6} \\ 
& =  1 + \sum_{n=1}^{\infty} \frac{(-1)^n (q\inv)^{\frac{n(n-1)}{2}}}{\varphi_n\pwr{\frac{1}{f};q\inv} \prod_{k=1}^{n-1}f(k)} \label{7}  \\
& = 1+\cfrac{\sum_{(6)}}{1-\cfrac{\sum_{(5)}}{1+\cfrac{\sum_{(6)}}{1-\cfrac{\sum_{(5)}}{1+ \cdots}}}} \label{8}
\end{align}
where $\sum_{(5)},\sum_{(6)}$ in \eqref{8} denote the summations appearing in \eqref{5} and \eqref{6}, respectively.
\end{theorem}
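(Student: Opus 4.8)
The plan is to establish each of the five expressions for $1/\varphi_\infty(f;q)$ in turn, proving \eqref{4} through \eqref{7} by direct manipulation and then assembling \eqref{8} out of \eqref{5} and \eqref{6}; throughout I abbreviate $S := 1/\varphi_\infty(f;q)$. For \eqref{4} I would expand each factor as a geometric series, $(1-f(n)q^n)\inv = \sum_{m\geq 0} f(n)^m q^{nm}$ (valid where $|f(n)q^n|<1$), multiply out, and rearrange by absolute convergence. A generic term is indexed by a choice of multiplicity $m_n\geq 0$ for each $n$; such a choice is precisely a partition $\lambda$ in which the part $n$ occurs $m_n$ times, and it contributes $\prod_n f(n)^{m_n} q^{n m_n} = q^{|\lambda|}\prod_{\lambda_i\in\lambda} f(\lambda_i)$. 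Summing over all choices gives \eqref{4}.

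For \eqref{5} and \eqref{6} I would use two complementary telescopings built straight from the definition of $\varphi_n$. On one hand, $\frac{1}{\varphi_n}-\frac{1}{\varphi_{n-1}} = \frac{1}{\varphi_{n-1}}\pwr{\frac{1}{1-f(n)q^n}-1} = \frac{q^n f(n)}{\varphi_n}$, so $\sum_{n=1}^N \frac{q^n f(n)}{\varphi_n}$ collapses to $\frac{1}{\varphi_N}-1$, and letting $N\to\infty$ yields \eqref{5}. Dually, $\varphi_{n-1}-\varphi_n = \varphi_{n-1}f(n)q^n$, so $\sum_{n=1}^N q^n f(n)\varphi_{n-1} = 1-\varphi_N \to 1-\varphi_\infty$; dividing by $\varphi_\infty$ and rearranging gives \eqref{6}.

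For \eqref{7} the key is a reciprocal-argument transformation obtained by factoring $1-\frac{1}{f(k)}q^{-k} = -\frac{q^{-k}}{f(k)}(1-f(k)q^k)$ term by term, which telescopes the product into
\begin{equation*}
\varphi_n\pwr{\tfrac{1}{f};q\inv} = (-1)^n q^{-n(n+1)/2}\,\frac{\varphi_n(f;q)}{\prod_{k=1}^n f(k)}.
\end{equation*}
Substituting this into the $n$-th summand of \eqref{7}, the $(-1)^n$ cancels, the powers of $q$ combine to $q^n$ (since $-\tfrac{n(n-1)}{2}+\tfrac{n(n+1)}{2}=n$), and the $f$-products collapse to $f(n)$, so the summand reduces exactly to $q^n f(n)/\varphi_n(f;q)$, the $n$-th term of \eqref{5}. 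Hence \eqref{7} equals \eqref{5}.

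Finally, for \eqref{8} I would write $A:=\sum_{(5)}$ and $B:=\sum_{(6)}$, so that \eqref{5} reads $S=1+A$ and \eqref{6} reads $S=1+SB$. From $S=1+A$ one gets $A/S = 1-1/S$, hence $1-A/S = 1/S$ and therefore $1+\frac{B}{1-A/S} = 1+SB = S$. This furnishes the functional equation $S = 1+\frac{B}{1-A/S}$; unfolding the copy of $S$ in the denominator repeatedly produces the alternating continued fraction \eqref{8}. The main obstacle I anticipate is precisely this last step: beyond the formal self-substitution, one must argue that the iterated continued fraction genuinely \emph{converges} to $S$ rather than stalling at some other fixed point of $x\mapsto 1+B/(1-A/x)$, which calls for control of the approximants (for instance via the smallness of $A,B$ when $|q|$ is small, or an explicit error estimate). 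The convergence and rearrangement steps underlying \eqref{4}, though routine, likewise deserve a careful word.
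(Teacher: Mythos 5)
Your proposal is correct and follows essentially the same route as the paper: geometric-series expansion for \eqref{4}, the two complementary telescopings for \eqref{5} and \eqref{6}, term-by-term substitution of the reciprocal-argument identity $\varphi_n(f;q)=(-1)^nq^{n(n+1)/2}\varphi_n(1/f;q\inv)\prod_{k=1}^nf(k)$ into \eqref{5} for \eqref{7}, and iteration of the fixed-point relation $S=1+\frac{\sum_{(6)}}{1-\sum_{(5)}/S}$ for \eqref{8}. Your closing caveat about convergence of the continued-fraction approximants is well taken --- the paper itself treats that step only formally.
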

The product on the right-hand side of identity \eqref{4} above is taken over the parts $\lambda_i$ of $\lambda$. Note that the summation in \eqref{7} converges for $q\inv$ outside the unit circle (it may converge inside the circle as well). Note also that, by L`Hospital's rule, any power series $\sum_{n=1}^{\infty}f(n)q^n$ with constant term zero can be written as the limit
\begin{equation*}
\sum_{n=1}^{\infty}f(n)q^n = \lim_{z\rra 0}z\inv \pwr{\frac{1}{\varphi_{\infty}(zf ; q)} - 1}.
\end{equation*}
It is obvious that if $f$ is completely multiplicative, then $\prod_{\lambda_i \in \lambda} f(\lambda_i)=f(n_{\lambda})$, where $n_{\lambda}$ is the so-called "integer" of $\lambda$ defined above. We record one more, obvious consequence of Theorem \ref{1.1}, as we assume it throughout this paper. As before, let $X \subseteq \mathbb Z^+$, and take $\mathcal P_X \subseteq \mathcal P$ to be the set of partitions into elements of $X$. Then clearly by setting $f(n)=0$ if $n\notin X$ in Theorem \ref{1.1}, we see
\begin{equation*}
\frac {1}{\prod_{n\in X} (1-f(n)q^n)} =  \sum_{\lambda \in \mathcal P_X} q^{|\lambda|} \prod_{\lambda_i \in \lambda} f(\lambda_i).
\end{equation*}
The remaining summations in the theorem (aside from \eqref{7}, which may not converge) are taken over $n\in X$.

We see from Theorem \ref{1.1} that we may pass freely between the shapes \eqref{4} -- \eqref{8}, which specialize to a number of classical expressions. For example, taking $f\equiv 1$ in the theorem gives the following fact.

\begin{coro}\label{1.2}
The partition generating function \eqref{1} is true.
\end{coro}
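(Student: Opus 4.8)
The plan is to obtain the corollary as the $f \equiv 1$ specialization of Theorem~\ref{1.1}, specifically of identity~\eqref{4}. First I would substitute the constant function $f(n) = 1$ into the definition of $\varphi_{\infty}$, observing that $\varphi_{\infty}(1;q) = \prod_{k=1}^{\infty}(1 - q^k) = (q;q)_{\infty}$, so that the left-hand side $1/\varphi_{\infty}(f;q)$ becomes precisely $1/(q;q)_{\infty}$, the left-hand side of~\eqref{1}. (This is consistent with the remark following the definition of $\varphi$, namely $\varphi_{\infty}(z;q) = (zq;q)_{\infty}$, evaluated at $z = 1$.)

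Next I would simplify the right-hand side of~\eqref{4}. With $f \equiv 1$, each factor $f(\lambda_i) = 1$, so the product $\prod_{\lambda_i \in \lambda} f(\lambda_i)$ collapses to $1$ for every partition $\lambda$, and the series reduces to $\sum_{\lambda \in \mathcal P} q^{|\lambda|}$. It then remains to regroup this sum by size: collecting all partitions of a fixed $n$ yields a coefficient equal to the number of such partitions, which by definition is $p(n)$. The empty partition $\emptyset$, under the convention $|\emptyset| = 0$, furnishes the constant term $q^0 = 1 = p(0)$, accounting for the $n = 0$ summand. Interchanging the order of summation thus gives $\sum_{n=0}^{\infty} p(n) q^n$, completing the identification with~\eqref{1}.

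The only point requiring genuine care is the combinatorial bookkeeping in the regrouping step, that is, verifying that the sum over all of $\mathcal P$ organized by weight reproduces exactly the generating function, with the empty partition correctly supplying the $n = 0$ term. No analytic subtlety remains, since convergence for $|q| < 1$ is already guaranteed by the hypotheses of Theorem~\ref{1.1} (the product $(q;q)_{\infty}$ converges in this range), so the rearrangement is justified and the corollary follows immediately.
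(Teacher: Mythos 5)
Your proposal is correct and follows the paper's proof exactly: the paper likewise obtains the result by setting $f\equiv 1$ in \eqref{4} and regrouping $\sum_{\lambda\in\mathcal P}q^{|\lambda|}$ by the size of $\lambda$ so that the coefficient of $q^n$ counts partitions of $n$. Your additional remarks on the empty partition and convergence are fine but add nothing beyond the paper's one-line argument.
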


Assuming $\Real (s)>1$, if we take $q = 1$, $f(n) = 1/n^s$ if $n$ is prime and $=0$ otherwise, then Theorem \ref{1.1} yields another classical fact, plus a formula giving the zeta function as a sum over primes.

\begin{coro}\label{1.3}
The Euler product formula \eqref{2} for the zeta function is true. We also have the identity 
\begin{equation*}
\zeta(s) = 1 + \sum_{p \in \mathbb P}\frac{1}{p^s \prod_{r\in \mathbb P,\,r\leq p}\pwr{1 - \frac{1}{r^s}}}.
\end{equation*}
\end{coro}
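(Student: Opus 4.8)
The plan is to specialize Theorem \ref{1.1} to the stated choice $q=1$ and $f(n) = n^{-s}$ for $n \in \mathbb{P}$, $f(n)=0$ otherwise, and then simply read off the two claimed identities from forms \eqref{4} and \eqref{5}. First I would record the convergence bookkeeping: for $\Real(s) > 1$ the series $\sum_{p \in \mathbb{P}} p^{-s}$ converges absolutely, so the product $\prod_{p \in \mathbb{P}}\pwr{1 - p^{-s}}$ converges to a nonzero value and $\varphi_{\infty}(f;1) = \prod_p(1-p^{-s})$ is a well-defined quantity with $1/\varphi_{\infty}(f;1)$ meaningful, even though $q=1$ sits on the boundary of the disc $|q|<1$. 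This absolute convergence is exactly what will license invoking the theorem's identities at $q=1$.

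For the Euler product, I would evaluate the right-hand side of \eqref{4}. Because every part $\lambda_i$ of a partition $\lambda \in \mathcal{P}_{\mathbb{P}}$ is prime, we have $\prod_{\lambda_i \in \lambda} f(\lambda_i) = \prod_{\lambda_i \in \lambda}\lambda_i^{-s} = n_\lambda^{-s}$, matching the completely multiplicative observation made just after the theorem. The key step is the fundamental theorem of arithmetic: the map $\lambda \mapsto n_\lambda$ is a bijection between $\mathcal{P}_{\mathbb{P}}$ (partitions into prime parts, with $\emptyset \mapsto 1$) and $\mathbb{Z}^+$, since a multiset of primes is precisely the data of a positive integer's prime factorization. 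Hence $\sum_{\lambda \in \mathcal{P}_{\mathbb{P}}} n_\lambda^{-s} = \sum_{n=1}^\infty n^{-s} = \zeta(s)$, the rearrangement being valid by absolute convergence for $\Real(s)>1$. Equating this with the product form of Theorem \ref{1.1} yields \eqref{2}.

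For the second identity, I would specialize form \eqref{5}. Setting $q=1$ gives $1 + \sum_{n=1}^\infty f(n)/\varphi_n(f;1)$, and since $f(n)=0$ unless $n$ is prime, only the prime indices $n = p$ survive, each contributing $f(p) = p^{-s}$ in the numerator. It then remains only to simplify the denominator $\varphi_p(f;1) = \prod_{k=1}^p\pwr{1-f(k)}$; because $f$ is supported on primes, this collapses to $\prod_{r \in \mathbb{P},\, r \leq p}\pwr{1 - r^{-s}}$. This reproduces the claimed formula exactly, and its value is $\zeta(s)$ since \eqref{5} is one of the equivalent forms in Theorem \ref{1.1}.

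The main obstacle is the passage to $q=1$: Theorem \ref{1.1} is stated for $|q|<1$, so I would confirm that for $\Real(s)>1$ every relevant product and series converges absolutely at $q=1$ (by comparison with $\sum_p p^{-\Real(s)}$, together with tail estimates for the series in \eqref{5}), so that the identities persist by absolute convergence and continuity rather than requiring $|q|<1$ strictly. Once that is secured, the bijective rearrangement via the fundamental theorem of arithmetic is the conceptual heart of the argument but is otherwise routine.
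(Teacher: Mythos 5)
Your proposal is correct and follows essentially the same route as the paper: specialize $f$ to be supported on the primes with $f(p)=p^{-s}$ at $q=1$, use the bijection between $\mathcal P_{\mathbb P}$ and $\mathbb Z^+$ furnished by unique prime factorization to identify the partition sum in \eqref{4} with $\sum_{n\geq 1} n^{-s}$, and read off the second identity from \eqref{5}. Your added attention to convergence at the boundary value $q=1$ is a reasonable supplement but does not change the argument.
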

\vspace{2em}

\section{Partition-theoretic zeta functions}
A multitude of nice specializations of Theorem \ref{1.1} may be obtained. We would like to focus on an interesting class of partition sums arising from Euler's sine function \eqref{3} combined with Theorem \ref{1.1}. Taking $q = 1$ (as we have done in Corollary \ref{1.3}), we begin by noting an easy partition-theoretic formula that may be used to compute the value of $\pi$.

Let $\mathcal P_{m\mathbb Z} \subseteq \mathcal P$ denote the set of partitions into multiples of $m$. Recall from above that the ``integer" $n_{\lambda}$ of a partition $\lambda$ is the product $\lambda_1\lambda_2\cdots \lambda_r$ of its parts.

\begin{coro}\label{1.4}
Summing over partitions into even parts, we have the formula
\begin{equation*}
\frac{\pi}{2} = \sum_{\lambda \in \mathcal P_{2\mathbb Z}} \frac{1}{n_{\lambda}^2}.
\end{equation*}
\end{coro}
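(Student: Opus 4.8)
The plan is to recognize the right-hand side as a direct specialization of identity \eqref{4} (in its $\mathcal P_X$-restricted form) and then collapse the resulting infinite product using Euler's sine product \eqref{3}. First I would set $q=1$, take $X = 2\mathbb Z^+$ to be the set of even positive integers, and choose $f(n) := 1/n^2$ when $n$ is even and $f(n):=0$ otherwise. For a partition $\lambda$ into even parts we then have $\prod_{\lambda_i\in\lambda} f(\lambda_i) = \prod_{\lambda_i\in\lambda} \lambda_i^{-2} = n_{\lambda}^{-2}$, so $\sum_{\lambda \in \mathcal P_{2\mathbb Z}} n_{\lambda}^{-2}$ is exactly the right-hand side of the $\mathcal P_X$-form of \eqref{4}, while the left-hand side collapses to $\prod_{k=1}^{\infty}\pwr{1 - (2k)^{-2}}^{-1}$.

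Next I would evaluate this product using \eqref{3}. Putting $x = \pi/2$, one has $\frac{(\pi/2)^2}{\pi^2 n^2} = \frac{1}{4n^2}$ and $\sin(\pi/2) = 1$, so \eqref{3} reads $\frac{\pi}{2}\prod_{n=1}^{\infty}\pwr{1 - \frac{1}{4n^2}} = 1$, whence $\prod_{n=1}^{\infty}\pwr{1 - (2n)^{-2}}^{-1} = \pi/2$. Combining this with the previous step yields the claimed identity; this is, in effect, the Wallis product dressed in partition-theoretic clothing.

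The main obstacle I expect is justifying the use of $q = 1$, since Theorem \ref{1.1} is stated for $|q| < 1$ and \eqref{4} is derived as a formal/analytic identity in that regime. I would handle this by arguing absolute convergence directly, rather than by any limiting argument in $q$. Grouping a partition into even parts according to the multiplicity $m_{2k} \geq 0$ of each even part $2k$ gives
$$\sum_{\lambda \in \mathcal P_{2\mathbb Z}} \frac{1}{n_{\lambda}^2} = \prod_{k=1}^{\infty} \sum_{m=0}^{\infty} \pwr{\frac{1}{(2k)^2}}^m = \prod_{k=1}^{\infty} \pwr{1 - \frac{1}{(2k)^2}}^{-1},$$
where each inner geometric series converges because the ratio $(2k)^{-2} < 1$, and the resulting product converges since $\sum_k (2k)^{-2} < \infty$. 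This legitimizes the rearrangement underpinning \eqref{4} at $q = 1$, which is the only delicate point.

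Everything else is routine substitution, so once the convergence bookkeeping is in place the corollary follows immediately from Theorem \ref{1.1} together with \eqref{3}.
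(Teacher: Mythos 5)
Your proof is correct and follows essentially the same route as the paper: the paper proves Corollary \ref{1.4} as the $m=2$ case of Corollary \ref{1.5}, which likewise combines the $\mathcal P_X$-form of \eqref{4} at $q=1$ with the Euler sine product (in the equivalent form $\pi z/\sin(\pi z)=\prod(1-z^2/n^2)^{-1}$ at $z=1/m$, which is your $x=\pi/2$ substitution). Your explicit absolute-convergence justification for taking $q=1$ is a welcome addition that the paper leaves implicit.
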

We notice that the form of the sum of the right-hand side resembles $\zeta(2)$. Based on this similarity, we wonder if there exists a nice partition-theoretic analog of $\zeta(s)$ possessing something of a familiar zeta function structure---perhaps Corollary \ref{1.4} gives an example of such a function? However, in this case it is not so: the above identity arises from different types of phenomena from those associated with $\zeta(s)$. We have an infinite family of formulas of the following shapes.
\begin{coro}\label{1.5}
Summing over partitions into multiples of any whole number $m > 1$, we have
\begin{eqnarray}
\sum_{\lambda \in \mathcal P_{m\mathbb Z}} \frac{1}{n_{\lambda}^2} &=& \frac{\pi}{m \, \sin \pwr{\frac{\pi}{m}}} \\
\sum_{\lambda \in \mathcal P_{m\mathbb Z}} \frac{1}{n_{\lambda}^4} &=& \frac{\pi^2}{m^2 \, \sin \pwr{\frac{\pi}{m}} \sinh \pwr{\frac{\pi}{m}}},
\end{eqnarray}
and increasingly complicated formulas can be computed for $\sum_{\lambda \in \mathcal P_{m \mathbb Z}} 1/n_{\lambda}^{2^t}$, $t \in \mathbb Z^+$.
\end{coro}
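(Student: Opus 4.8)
The plan is to specialize Theorem~\ref{1.1} to the set $\mathcal P_{m\mathbb Z}$ and then recognize the resulting infinite product as an instance of Euler's sine product \eqref{3}. First I would take $X$ to be the set of positive multiples of $m$, set $q = 1$, and choose $f(n) = 1/n^{2^t}$ for $n \in X$ (and $f(n) = 0$ otherwise). For any part $\lambda_i$ we then have $f(\lambda_i) = \lambda_i^{-2^t}$, so the product over parts becomes $\prod_{\lambda_i \in \lambda} f(\lambda_i) = (\lambda_1 \cdots \lambda_r)^{-2^t} = n_\lambda^{-2^t}$ (equivalently, invoke complete multiplicativity of $n \mapsto n^{-2^t}$). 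The $\mathcal P_X$-form of identity \eqref{4} recorded just after Theorem~\ref{1.1} then gives
\begin{equation*}
\sum_{\lambda \in \mathcal P_{m\mathbb Z}} \frac{1}{n_\lambda^{2^t}} = \frac{1}{\displaystyle\prod_{k=1}^\infty \left(1 - \frac{1}{(mk)^{2^t}}\right)},
\end{equation*}
so the entire problem reduces to evaluating the product in the denominator.

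For $t = 1$, I would substitute $x = \pi/m$ into \eqref{3} to obtain $\prod_{k=1}^\infty\left(1 - \frac{1}{m^2 k^2}\right) = \frac{m}{\pi}\sin\!\left(\frac{\pi}{m}\right)$; taking reciprocals yields the first formula. For $t = 2$, I would factor $1 - \frac{1}{(mk)^4} = \left(1 - \frac{1}{(mk)^2}\right)\left(1 + \frac{1}{(mk)^2}\right)$ and split the product accordingly. The first factor is exactly the $t = 1$ product; the second is evaluated from the companion hyperbolic identity $\sinh x = x\prod_{k=1}^\infty\left(1 + \frac{x^2}{\pi^2 k^2}\right)$, which follows from \eqref{3} by replacing $x$ with $ix$ and using $\sin(ix) = i\sinh x$. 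Setting $x = \pi/m$ gives $\prod_{k=1}^\infty\left(1 + \frac{1}{m^2 k^2}\right) = \frac{m}{\pi}\sinh\!\left(\frac{\pi}{m}\right)$, and multiplying the two pieces and taking the reciprocal produces the second formula.

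For the general $2^t$ case I would exploit the factorization $1 - y^{2^t} = \prod_{\omega}(1 - \omega y)$ over the $2^t$-th roots of unity $\omega$, grouping each pair $\pm\omega$ (both are roots since $2^t$ is even) into a single convergent factor $1 - \omega^2 y^2$. With $y = 1/(mk)$ this turns the denominator into a product over the $2^{t-1}$ pairs of
\begin{equation*}
\prod_{k=1}^\infty\left(1 - \frac{\omega^2}{m^2 k^2}\right) = \frac{\sin(\pi \omega/m)}{\pi\omega/m},
\end{equation*}
the last equality being Euler's product in its entire-function form $\prod_{k\ge 1}\left(1 - \frac{w^2}{k^2}\right) = \frac{\sin(\pi w)}{\pi w}$, valid for all complex $w = \omega/m$. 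Evaluating $\sin(\pi\omega/m)$ at the roots $\omega = \pm 1, \pm i, \dots$ turns real roots into $\sin$ factors and imaginary roots into $\sinh$ factors, reproducing the two displayed cases and generating the promised increasingly complicated closed forms.

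The hard part will be purely one of rigor in this last step: a single-root factor $\prod_k(1 - \omega y)$ diverges on its own, so I must justify the conjugate-pairing and the rearrangement of the (absolutely convergent) double product before invoking the sine product at complex arguments. Verifying that \eqref{3} extends to complex $w$ is immediate, since both sides are entire functions of the same order sharing the same zeros; so once the regrouping is justified, the remaining computation is routine.
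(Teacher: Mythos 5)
Your proposal is correct and follows essentially the same route as the paper: both reduce the partition sum to the infinite product $\prod_{k\geq 1}\bigl(1-\tfrac{1}{(mk)^{2^t}}\bigr)^{-1}$ via identity \eqref{4} with the completely multiplicative weight $n\mapsto n^{-2^t}$, and both evaluate that product using Euler's sine and hyperbolic sine products at arguments rotated by roots of unity. The only organizational difference is that the paper builds the $2^t$ case recursively, repeatedly multiplying each product identity by its companion with $z$ rotated by a suitable root of unity, whereas you factor $1-y^{2^t}$ over all $2^t$-th roots of unity at once and pair conjugates---the resulting $\sin$/$\sinh$ factors are identical, and your closing remark about justifying the conjugate pairing is exactly the convergence point the paper's recursive multiplication sidesteps.
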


Examples like these are appealing, but their right-hand sides are not entirely reminiscent of the Riemann zeta function, aside from the presence of $\pi$. Certainly they are not as tidy as expressions of the form $\zeta(2k) = ``\pi^{2k} \times \text{rational}"$. Based on the previous corollaries, a reasonable first guess at a partition-theoretic analog of $\zeta(s)$ might be to define
\begin{equation*}
\zeta_{\mathcal P}(s) := \sum_{\lambda \in \mathcal P} \frac{1}{n_{\lambda}^s} = \frac{1}{\prod_{n=1}^{\infty} \pwr{1 - \frac{1}{n^s}}}, \Real (s) > 1.
\end{equation*}
Of course, neither side of this identity converges, but we do obtain convergent expressions if we omit the first term and perhaps subsequent terms of the product to yield $\zeta_{\mathcal P_{\geq a}}(s) := \sum_{\lambda \in \mathcal P_{\geq a}} 1/n_{\lambda}^s = \prod_{n=a}^{\infty} (1-1/n^s)\inv\,\,\,(a\geq 2)$, where $\mathcal P_{\geq a} \subset \mathcal P$ denotes the set of partitions into parts greater than or equal to $a$. For instance, we have the following formula.
\begin{coro}\label{1.6}
Summing over partitions into parts greater than or equal to $2$, we have 
\begin{equation*}
\zeta_{\mathcal P_{\geq 2}}(3) = \sum_{\lambda \in \mathcal P_{\geq 2}} \frac{1}{n_{\lambda}^3} = \frac{3\pi }{\cosh \pwr{\frac{1}{2} \pi \sqrt{3}}}.
\end{equation*}
\end{coro}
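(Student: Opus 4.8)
The starting point is the product form recorded just before the statement: as a consequence of Theorem~\ref{1.1} (identity~\eqref{4} at $q=1$), we have $\zeta_{\mathcal P_{\geq 2}}(3) = \prod_{n=2}^{\infty}(1-1/n^3)\inv = \prod_{n=2}^{\infty}\tfrac{n^3}{n^3-1}$. So the entire problem reduces to evaluating this one infinite product in closed form, and the plan is to factor the cubic and recognize the product as a ratio of Gamma values. Writing $\omega := e^{2\pi i/3}$, the three cube roots of unity are $1,\omega,\bar\omega$, so $\prod_{\alpha\in\{1,\omega,\bar\omega\}}(1-\alpha/n) = (n^3-1)/n^3$, i.e. the factor $1-1/n^3$ splits into three linear pieces in $1/n$.

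The key arithmetic fact I will exploit is $1+\omega+\bar\omega = 0$ (equivalently $\omega+\bar\omega=-1$), which is exactly what makes the product converge. I would regularize via the Weierstrass product $\prod_{n=1}^{\infty}(1-\alpha/n)e^{\alpha/n} = e^{\gamma\alpha}/\Gamma(1-\alpha)$, inserting for each $n$ the convergence factor $e^{(1+\omega+\bar\omega)/n}=1$ so that nothing is changed, and then regrouping into the three individually convergent products over $\alpha\in\{1,\omega,\bar\omega\}$. The $\alpha=1$ product must have its vanishing $n=1$ term removed by hand; evaluating it as $\lim_N \tfrac1N e^{H_N-1} = e^{\gamma-1}$ is where the harmonic-number asymptotic $H_N = \ln N + \gamma + o(1)$ enters. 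Collecting the three pieces, all $\gamma$- and exponential contributions cancel identically (again because $1+\omega+\bar\omega=0$), leaving $\prod_{n=2}^{\infty}(1-1/n^3) = [\Gamma(2-\omega)\Gamma(2-\bar\omega)]\inv$ after using $(1-\omega)\Gamma(1-\omega)=\Gamma(2-\omega)$ and likewise for $\bar\omega$. Taking reciprocals gives
\[
\zeta_{\mathcal P_{\geq 2}}(3) = \Gamma(2-\omega)\Gamma(2-\bar\omega).
\]

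It remains to evaluate this product of Gamma values, which is routine. Since $\omega = -\tfrac12 + i\tfrac{\sqrt3}{2}$, we have $2-\omega = \tfrac52 - i\tfrac{\sqrt3}{2}$ and $2-\bar\omega = \tfrac52 + i\tfrac{\sqrt3}{2}$, so the product is $\bigl|\Gamma(\tfrac52 + i\tfrac{\sqrt3}{2})\bigr|^2$. Applying $\Gamma(z+1)=z\Gamma(z)$ twice peels off the factors $\bigl|\tfrac32 + i\tfrac{\sqrt3}{2}\bigr|^2 = 3$ and $\bigl|\tfrac12 + i\tfrac{\sqrt3}{2}\bigr|^2 = 1$, reducing matters to $3\,\bigl|\Gamma(\tfrac12 + i\tfrac{\sqrt3}{2})\bigr|^2$. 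Finally the reflection formula $\Gamma(z)\Gamma(1-z)=\pi/\sin\pi z$ with $z=\tfrac12+iy$ yields $\bigl|\Gamma(\tfrac12+iy)\bigr|^2 = \pi/\cosh(\pi y)$ (using $\sin(\tfrac\pi2 + i\pi y) = \cosh(\pi y)$ and $\Gamma(\bar z)=\overline{\Gamma(z)}$); setting $y=\tfrac{\sqrt3}{2}$ gives $\zeta_{\mathcal P_{\geq 2}}(3) = 3\pi/\cosh\pwr{\tfrac12\pi\sqrt3}$, as claimed.

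The main obstacle is the convergence bookkeeping in the second paragraph: neither $\prod(1-1/n)$ nor $\prod(1-\omega/n)$ converges on its own, so one cannot naively split the cubic into its linear factors and evaluate piece by piece. The whole computation hinges on keeping the three roots together and using $1+\omega+\bar\omega=0$ to cancel the divergent exponential and $N$-growth, while correctly isolating the $n=1$ term that the factor $(1-1/n)$ kills. Once the closed form $\Gamma(2-\omega)\Gamma(2-\bar\omega)$ is in hand, everything else is a short exercise with the functional equation and reflection formula of the Gamma function.
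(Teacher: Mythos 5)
Your argument is correct, but it takes a different route from the paper at the crucial step. Both proofs begin identically, using identity \eqref{4} of Theorem \ref{1.1} with $q=1$ and $f(n)=1/n^3$ for $n\geq 2$ to reduce the corollary to the single product evaluation $\prod_{n=2}^{\infty}(1-1/n^3) = \cosh\pwr{\tfrac12\pi\sqrt3}/(3\pi)$. At that point the paper simply quotes this evaluation as a known identity of Ramanujan (Question 261 of the \textit{Journal of the Indian Mathematical Society}, reprinted in his Collected Papers), whereas you prove it from scratch: factoring $1-1/n^3$ over the cube roots of unity, regularizing with Weierstrass convergence factors $e^{\alpha/n}$ (which multiply to $1$ since $1+\omega+\bar\omega=0$), carefully extracting the vanishing $n=1$ factor from the $\alpha=1$ piece via $\lim_N \tfrac1N e^{H_N-1}=e^{\gamma-1}$, and then reducing $\Gamma(2-\omega)\Gamma(2-\bar\omega)=\bigl|\Gamma(\tfrac52+i\tfrac{\sqrt3}{2})\bigr|^2$ to the stated closed form with the functional equation and reflection formula. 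I checked the bookkeeping --- the cancellation of all $\gamma$- and exponential contributions, the factors $3$ and $1$ peeled off by $\Gamma(z+1)=z\Gamma(z)$, and the identity $|\Gamma(\tfrac12+iy)|^2=\pi/\cosh(\pi y)$ --- and it all goes through. What your approach buys is self-containedness and generality: the same Gamma-product technique evaluates $\prod_{n\geq a}(1\pm 1/n^k)$ for other $k$ and $a$ (including Ramanujan's companion product $\prod_{n\geq 1}(1+1/n^3)$ mentioned in the paper's remark), at the cost of a longer computation and the delicate convergence bookkeeping you rightly flag; the paper's citation is shorter but leaves the reader to trust or look up Ramanujan's formula.
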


While it is an interesting expression, stemming from an identity of Ramanujan \cite{ramanujan2000collected}, once again this formula does not seem to evoke the sort of structure we anticipate from a zeta function---of course, the value of $\zeta(3)$ is not even known. We need to find the ``right" subset of $\mathcal P$ to sum over, if we hope to find a nice partition-theoretic zeta function. As it turns out, there are subsets of $\mathcal P$ that naturally produce analogs of $\zeta(s)$ for certain values of $s$.

\begin{defn}
We define a partition-theoretic generalization $\zeta_{\mathcal P}(\brk{s}^k)$ of the Riemann zeta function by the following sum over all partitions $\lambda$ of fixed length $\ell(\lambda) = k \in \mathbb Z_{\geq 0}$ at argument $s \in \mathbb C$, $\Real (s) > 1$:
\begin{equation}\label{11}
\zeta_{\mathcal P}(\brk{s}^k) := \sum_{\ell(\lambda) = k} \frac{1}{n_{\lambda}^s}. 
\end{equation}
\end{defn}

\begin{remark}
This is a fairly natural formation, being similar in shape (and notation) to the weight $k$ multiple zeta function $\zeta(\brk{s}^k)$, which is instead summed over length-$k$ partitions into distinct parts; Hoffman gives interesting formulas relating $\zeta_{\mathcal P}(\brk{s}^k)$ (in different notation) to combinations of multiple zeta functions \cite{hoffman1992multiple}, which exhibit rich structure.
\end{remark}

We have immediately that $\zeta_{\mathcal P}(\brk{s}^0) = 1/n_{\emptyset}^s = 1$ and  $\zeta_{\mathcal P}(\brk{s}^1) = \zeta_{\mathcal P}(\brk{s}) = \zeta(s)$. Using Theorem \ref{1.1} and proceeding (see Section \ref{Proofs of theorems and corollaries}) much as Euler did to find the value of $\zeta(2k)$ \cite{dunham1999euler}, we are able to find explicit values for $\zeta_{\mathcal P}(\brk{2}^k)$ at every positive integer $k >0$. Somewhat surprisingly, we find that in these cases $\zeta_{\mathcal P}(\brk{2}^k)$ is a rational multiple of $\zeta(2k)$.

\begin{coro}\label{1.7}
For $k>0$, we have the identity 
\begin{equation*}
\zeta_{\mathcal P}(\brk{2}^k) = \sum_{\ell(\lambda) = k} \frac{1}{n_{\lambda}^2} = \frac{2^{2k - 1} - 1}{2^{2k-2}}\zeta(2k).
\end{equation*}
For example, we have the following values:
\begin{align*}
\zeta_{\mathcal P}(\brk{2}) &= \zeta(2) = \frac{\pi^2}{6}, \\
\zeta_{\mathcal P}(\brk{2}^2) &= \frac{7}{4}\zeta(4) = \frac{7\pi^4}{360}, \\
\zeta_{\mathcal P}(\brk{2}^3) &= \frac{31}{16}\zeta(6)= \frac{31\pi^6}{15120},\dots , \\
\zeta_{\mathcal P}(\brk{2}^{13}) &= \frac{33554431}{16777216}\zeta(26) = \frac{22076500342261\pi^{26}}{93067260259985915904000000},\dots
\end{align*}
\end{coro}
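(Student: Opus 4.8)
The plan is to package all lengths at once by promoting the length $\ell(\lambda)$ to the exponent of an auxiliary variable, then to recover the length-$k$ sum as a Taylor coefficient. Concretely, I would apply Theorem \ref{1.1}, identity \eqref{4}, with $q=1$ and $f(n)=z/n^2$, where $z$ is a small complex (or formal) parameter. Since $\prod_{\lambda_i \in \lambda} f(\lambda_i) = z^{\ell(\lambda)}/n_\lambda^2$, grouping the partitions by length gives the generating identity
\begin{equation*}
\sum_{k=0}^{\infty} \zeta_{\mathcal P}(\brk{2}^k)\, z^k \;=\; \sum_{\lambda \in \mathcal P} \frac{z^{\ell(\lambda)}}{n_\lambda^2} \;=\; \prod_{n=1}^{\infty}\pwr{1 - \frac{z}{n^2}}\inv,
\end{equation*}
which is absolutely convergent on both sides for $|z|<1$, so comparison of coefficients of $z^k$ will be legitimate.

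Next I would put the product in closed form using Euler's sine expansion \eqref{3}. Substituting $x=\pi\sqrt z$ into \eqref{3} gives $\prod_{n\geq 1}(1-z/n^2) = \sin(\pi\sqrt z)/(\pi\sqrt z)$, hence
\begin{equation*}
\sum_{k=0}^{\infty} \zeta_{\mathcal P}(\brk{2}^k)\, z^k \;=\; \frac{\pi\sqrt z}{\sin(\pi\sqrt z)}.
\end{equation*}
The whole problem thus reduces to reading off the coefficient of $z^k$ in $x\csc x$ with $x=\pi\sqrt z$; note that $x\csc x$ is even in $x$, so it is genuinely a power series in $x^2=\pi^2 z$ and there is no branch ambiguity in $\sqrt z$.

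To extract the coefficient ``as Euler did,'' I would sidestep Bernoulli numbers and instead use the elementary identity $\csc x = \cot(x/2)-\cot x$ together with Euler's classical cotangent generating function for the even zeta values,
\begin{equation*}
\pi w \cot(\pi w) \;=\; 1 - 2\sum_{k=1}^{\infty}\zeta(2k)\, w^{2k}.
\end{equation*}
Writing $x\csc x = 2\cdot\tfrac{x}{2}\cot\pwr{\tfrac{x}{2}} - x\cot x$ and setting $x=\pi\sqrt z$ evaluates the two cotangent terms at $w=\sqrt z/2$ and $w=\sqrt z$, respectively; expanding and collecting turns the right-hand side into $1 + \sum_{k\geq 1}\pwr{2 - 4^{1-k}}\zeta(2k)\, z^k$. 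Matching the coefficient of $z^k$ with $\zeta_{\mathcal P}(\brk{2}^k)$ and simplifying $2 - 4^{1-k} = (2^{2k-1}-1)/2^{2k-2}$ yields the claimed formula, the constant term $1$ matching $\zeta_{\mathcal P}(\brk{2}^0)=1$.

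The \textbf{main obstacle} is not a hard estimate but the bookkeeping in this last step: one must track how the powers of $2$ produced by evaluating $\cot$ at the halved argument $x/2$ combine to give exactly the factor $(2^{2k-1}-1)/2^{2k-2}$, and one should confirm that the $4^{1-k}$ contribution comes solely from the halved-argument term. A secondary, routine point is the analytic justification for interchanging the partition sum with the product and for comparing Taylor coefficients, which is furnished by the absolute convergence of both sides for $|z|<1$ (the first pole of $\pi\sqrt z/\sin(\pi\sqrt z)$ occurring at $z=1$).
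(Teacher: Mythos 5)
Your proposal is correct and follows essentially the same route as the paper: apply identity \eqref{4} with $f(n)=z/n^2$ to turn $\prod_{n\geq 1}(1-z/n^2)^{-1}$ into a generating function for $\zeta_{\mathcal P}(\brk{2}^k)$ in the length variable, identify it as $\pi\sqrt{z}/\sin(\pi\sqrt{z})$ via Euler's sine product, and compare Taylor coefficients (the paper works with the equivalent $z/\sinh z$). The only difference is cosmetic: where the paper cites the known Maclaurin expansion of the cosecant in terms of Bernoulli numbers to get the coefficients $4(2^{2k-1}-1)\zeta(2k)/(2\pi)^{2k}$, you rederive them from $\csc x=\cot(x/2)-\cot x$ and the cotangent generating function, and your bookkeeping $2-4^{1-k}=(2^{2k-1}-1)/2^{2k-2}$ checks out.
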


Corollary \ref{1.7} reveals that $\zeta_{\mathcal P}(\brk{2}^k)$ is indeed of the form $``\pi^{2k} \times \text{rational}"$ for all positive $k$, like the zeta values $\zeta(2k)$ given by Euler (we note that $\zeta(26)$ is the highest zeta value Euler published) \cite{dunham1999euler}. We have more: we can find $\zeta_{\mathcal P}(\brk{2^t}^k)$ explicitly for all $t \in \mathbb Z^+$. These values are finite combinations of well-known zeta values, and are also of the form $``\pi^{2^tk} \times \text{rational}"$.

\begin{coro}\label{1.8}
For $k>0$ we have the identity
\begin{align*}
\zeta_{\mathcal P}(\brk{4}^k) & = \sum_{\ell(\lambda) = k}\frac{1}{n_{\lambda}^4}\\ 
& = \frac{1}{16^{k-1}} \pwr{\sum_{n=0}^{2k} (-1)^n(2^{2n-1}-1)(2^{4k - 2n - 1} - 1)\zeta(2n)\zeta(4k - 2n)},
\end{align*}
and increasingly complicated formulas can be computed for $\zeta_{\mathcal P}(\brk{2^t}^k), t \in \mathbb Z^+$.
\end{coro}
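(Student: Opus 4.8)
The plan is to realize $\zeta_{\mathcal P}(\brk{4}^k)$ as a single coefficient in a length-tracking generating function and then read it off from a closed form built out of Euler's sine product. First I would apply identity \eqref{4} of Theorem \ref{1.1} with $q=1$ and $f(n) = z/n^4$, so that $\prod_{\lambda_i\in\lambda} f(\lambda_i) = z^{\ell(\lambda)}/n_\lambda^4$ and
\begin{equation*}
\prod_{n=1}^\infty \pwr{1 - \frac{z}{n^4}}^{-1} = \sum_{\lambda\in\mathcal P} \frac{z^{\ell(\lambda)}}{n_\lambda^4} = \sum_{k=0}^\infty z^k \, \zeta_{\mathcal P}(\brk{4}^k).
\end{equation*}
Since $\sum z/n^4$ converges absolutely, the product is analytic near $z=0$ and this rearrangement is valid, so the task reduces to extracting the coefficient of $z^k$. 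This parallels the $s=2$ case behind Corollary \ref{1.7}, where $\prod(1-z/n^2)^{-1} = \pi\sqrt z/\sin(\pi\sqrt z)$ yields the single-zeta answer.

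Next I would obtain a closed form for the $s=4$ product by factoring $1 - z/n^4 = \pwr{1-\sqrt z/n^2}\pwr{1+\sqrt z/n^2}$ and applying the sine product \eqref{3} together with its hyperbolic companion $\prod(1+x^2/n^2) = \sinh(\pi x)/(\pi x)$, taking $x = z^{1/4}$. Writing $u := \pi z^{1/4}$, this gives
\begin{equation*}
\prod_{n=1}^\infty \pwr{1-\frac{z}{n^4}}^{-1} = \frac{u^2}{\sin u\,\sinh u} = \frac{u}{\sin u}\cdot \frac{u}{\sinh u}.
\end{equation*}
This function is invariant under $u\mapsto iu$ (both $u^2$ and $\sin u\,\sinh u$ change sign), hence is a genuine power series in $u^4 = \pi^4 z$; extracting the coefficient of $z^k$ is the same as extracting the coefficient of $u^{4k}$.

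Finally I would expand the two factors using $\zeta(2m) = (-1)^{m-1}(2\pi)^{2m}B_{2m}/(2\,(2m)!)$ applied to the classical expansion of $x/\sin x$. This gives $u/\sin u = \sum_{m\geq 0} a_m u^{2m}$ with $a_m = 4(2^{2m-1}-1)\zeta(2m)/(2\pi)^{2m}$ (so $a_0=1$, consistent with $\zeta(0)=-1/2$, and the coefficient of $z^k$ in $u/\sin u$ reproduces Corollary \ref{1.7}), while the substitution $u\mapsto iu$ gives $u/\sinh u = \sum_{j\geq 0}(-1)^j a_j u^{2j}$. Taking the Cauchy product and collecting the $u^{4k}$ term forces $m+j = 2k$, so with $n:=m$ and $(-1)^{2k-n}=(-1)^n$ the coefficient of $z^k$ is $\pi^{4k}\sum_{n=0}^{2k}(-1)^n a_n a_{2k-n}$. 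Each product $a_n a_{2k-n}$ carries a factor $(2\pi)^{-4k}$, and the surviving constant $16\,\pi^{4k}/(2\pi)^{4k} = 1/16^{k-1}$ yields exactly the stated identity. The main obstacle is bookkeeping in this last step: one must confirm that the odd-index ($m+j$ odd) contributions vanish, which follows from the $u\mapsto iu$ symmetry, and track signs carefully through the reindexing, rather than any conceptual difficulty.
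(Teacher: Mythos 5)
Your proposal is correct and follows essentially the same route as the paper's own proof: both identify $\sum_k z^k\,\zeta_{\mathcal P}(\brk{4}^k)$ with $u^2/(\sin u\,\sinh u)$ via the factorization $1-z/n^4=(1-\sqrt z/n^2)(1+\sqrt z/n^2)$ and Euler's products, expand $u/\sin u$ and $u/\sinh u$ with the coefficients $4(2^{2m-1}-1)\zeta(2m)/(2\pi)^{2m}$ (the paper's $\alpha_{2m}$, $\beta_{2m}=(-1)^m\alpha_{2m}$), and extract the $u^{4k}$ coefficient of the Cauchy product. The only cosmetic difference is your change of variable $u=\pi z^{1/4}$ and your explicit symmetry argument for why odd-order terms vanish, which the paper leaves implicit.
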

\begin{remark}
The summation on the far right above may be shortened by noting the symmetry of the summands around the $n=k$ term.
\end{remark}

It would be desirable to understand the value of $\zeta_{\mathcal P}(\brk{s}^k)$ at other arguments $s$; the proof we give below (see Section \ref{Proofs of theorems and corollaries}) does not shed much light on this question, being based very closely on Euler's formula \eqref{1.3}, which forces $s$ be a power of $2$. Also, if we solve Corollary \ref{1.6} for $\zeta(0)$, we conclude that $\zeta(0) = \frac{2^{-2}}{2\inv - 1}\zeta_{\mathcal P}(\brk{2}^0) = -1/2$, which is the value of $\zeta(0)$ under analytic continuation. Can $\zeta_{\mathcal P}(\brk{s}^k)$ be extended via analytic continuation for values of $k > 1$? In a larger sense we wonder: do nice zeta function analogs exist if we sum over other interesting subsets of $\mathcal P$?

We do have a few general properties shared by convergent series $\sum 1/n_{\lambda}^s$ summed over large subclasses of $\mathcal P$. First we need to refine some of our previous notations.

\begin{defn}
Take any subset of partitions $\mathcal P' \subseteq \mathcal P$. Then for $\Real (s) > 1$, on analogy to classical zeta function theory, when these expressions converge we define
\begin{align}
\zeta_{\mathcal P'} &:= \sum_{\lambda \in \mathcal P'} \frac{1}{n_{\lambda}^s}, &\eta_{\mathcal P'}(s)& := \sum_{\lambda \in \mathcal P'} \frac{(-1)^{\ell(\lambda)}}{n_{\lambda}^s}, &\zeta_{\mathcal P'}(\brk{s}^k) &:= \sum_{\begin{tiny}
\begin{array}{c}
\lambda \in \mathcal P' \\
\ell(\lambda) = k
\end{array}\end{tiny}} \frac{1}{n_{\lambda}^s}.
\end{align}
\end{defn}

\begin{remark} 
As important special cases, we have $\zeta_{\mathcal P_{\mathbb P}}(s) = \zeta(s)$ and $\zeta_{\mathcal P_{\mathbb Z^+}}(\brk{s}^k) = \zeta_{\mathcal P}(\brk{s}^k)$. It is also easy to see that $\zeta_{\mathcal P'}(s)=\sum_{k=0}^{\infty} \zeta_{\mathcal P'}(\brk{s}^k)$ and $\eta_{\mathcal P'}(s)=\sum_{k=0}^{\infty} (-1)^{k} \zeta_{\mathcal P'}(\brk{s}^k)$ if we assume absolute convergence. Moreover, given absolute convergence, we may write $\zeta_{\mathcal P'}(s),\,\zeta_{\mathcal P'}(\brk{s}^k)$ as classical Dirichlet series related to multiplicative partitions: we have $\zeta_{\mathcal P'}(s)=\sum_{j=1}^{\infty}\# \{\lambda \in \mathcal P'\,|\,\, n_{\lambda}=j\}\, j^{-s}$ and $\zeta_{\mathcal P'}(\brk{s}^k)(s)=\sum_{j=1}^{\infty}\# \{\lambda \in \mathcal P'\,|\,\, \ell(\lambda)=k,n_{\lambda}=j\}\,j^{-s}$ (see \cite{chamberland2013gamma} for more about multiplicative partitions).

\end{remark} 

As previously, take $X \subseteq \mathbb Z^+$ and take $\mathcal P_{X} \subseteq \mathcal P$ to denote partitions into elements of $X$ (thus $\mathcal P_{\mathbb Z^+} = \mathcal P$). Note that $\zeta_{\mathcal P_X}(s) = \prod_{n \in X} \pwr{1 - \frac{1}{n^s}}\inv$ is divergent if $1 \in X$ and, when $X$ is finite (thus there is no restriction on the value of $\Real(s)$), if $s=i\pi j/\log n$ for any $n\in X$ and even integer $j$. Similarly, when $X$ is finite, $\eta_{\mathcal P_X}(s) = \prod_{n \in X} \pwr{1 + \frac{1}{n^s}}\inv$ is divergent if $s=i\pi k/\log n$ for any $n\in X$ and odd integer $k$. Clearly if $Y \subseteq \mathbb Z^+$, then from the product representations we also have $\zeta_{\mathcal P_X}(s)\zeta_{\mathcal P_Y}(s)=\zeta_{\mathcal P_{X \cup Y}}(s)\zeta_{\mathcal P_{X \cap Y}}(s)$ and $\eta_{\mathcal P_X}(s)\eta_{\mathcal P_Y}(s)=\eta_{\mathcal P_{X \cup Y}}(s)\eta_{\mathcal P_{X \cap Y}}(s)$. 

Many interesting subsets of partitions have the form $\mathcal P_X$, in particular those to which Theorem \ref{1.1} most readily applies. Note that such subsets $\mathcal P_X$ are partition ideals of order 1, in the sense of Andrews \cite{andrews1998theory}. With the above notations, we have the following useful ``doubling" formulas.

\begin{coro}\label{1.9}
If $\zeta_{\mathcal P_X}(s)$ converges over $\mathcal P_X \subseteq \mathcal P$, then 
\begin{equation}\label{13}
\zeta_{\mathcal P_X}(2s) = \zeta_{\mathcal P_X}(s)\eta_{\mathcal P_X}(s).
\end{equation}
Furthermore, for $n \in \mathbb Z_{\geq 0}$ we have the identity 
\begin{equation}\label{14}
\zeta_{\mathcal P_X}\left(\{2^{n+1}s\}^k\right) = \sum_{j=0}^{2^nk}(-1)^j\zeta_{\mathcal P_X}\left(\brk{2^ns}^j\right)\zeta_{\mathcal P_X}\left(\brk{2^ns}^{2^nk - j}\right).
\end{equation}
\end{coro}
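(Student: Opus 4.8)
The plan is to derive everything from the case $q=1$ of identity \eqref{4}. Setting $f(n)=1/n^s$ for $n\in X$ and $f(n)=0$ otherwise recovers the Euler product $\zeta_{\mathcal P_X}(s)=\prod_{n\in X}\pwr{1-n^{-s}}^{-1}$, while the choice $f(n)=-1/n^s$ on $X$ produces $\eta_{\mathcal P_X}(s)=\prod_{n\in X}\pwr{1+n^{-s}}^{-1}$. Identity \eqref{13} is then immediate: multiplying the two products factor-by-factor and using $(1-n^{-s})(1+n^{-s})=1-n^{-2s}$ collapses them to $\prod_{n\in X}\pwr{1-n^{-2s}}^{-1}=\zeta_{\mathcal P_X}(2s)$. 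This is the cleanest route; one could instead argue on the Dirichlet-series side by matching multiplicative-partition counts, but the product form makes the cancellation transparent.

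For \eqref{14} I would track the length of a partition with a formal variable $z$. Applying \eqref{4} with $q=1$ and $f(n)=z/n^s$ on $X$ gives the length-graded generating function
\[
F_s(z):=\prod_{n\in X}\pwr{1-\frac{z}{n^s}}^{-1}=\sum_{\lambda\in\mathcal P_X}\frac{z^{\ell(\lambda)}}{n_\lambda^s}=\sum_{k=0}^{\infty}\zeta_{\mathcal P_X}(\brk{s}^k)\,z^k,
\]
so that $\zeta_{\mathcal P_X}(\brk{s}^k)=[z^k]\,F_s(z)$. The $z$-graded refinement of \eqref{13} is the factorization $F_s(z)F_s(-z)=F_{2s}(z^2)$, proved exactly as before since $(1-zn^{-s})(1+zn^{-s})=1-z^2n^{-2s}$. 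Comparing the coefficient of $z^{2k}$ on both sides then does the job: on the right $F_{2s}(z^2)=\sum_m\zeta_{\mathcal P_X}(\brk{2s}^m)z^{2m}$ contributes $\zeta_{\mathcal P_X}(\brk{2s}^k)$, while on the left the Cauchy product of $F_s(z)$ with $F_s(-z)$ contributes the alternating length-convolution, yielding
\[
\zeta_{\mathcal P_X}(\brk{2s}^k)=\sum_{j=0}^{2k}(-1)^j\,\zeta_{\mathcal P_X}(\brk{s}^j)\,\zeta_{\mathcal P_X}(\brk{s}^{2k-j}).
\]
Replacing $s$ by $2^n s$ throughout then produces the stated family \eqref{14}, each application of the factorization raising the base argument $2^n s$ to $2^{n+1}s$; here one should read off the index range directly from the $z^{2k}$ coefficient, taking care with the bookkeeping of the exponent pairs.

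The main obstacle will be justifying these manipulations analytically rather than merely formally. I would first record that for $\Real(s)>1$ (and $X$ such that the products converge, e.g. excluding the divergent case $1\in X$) all the series and products converge absolutely, so that each $\zeta_{\mathcal P_X}(\brk{s}^k)=[z^k]F_s(z)$ is a genuine convergent series and the product-to-sum passage from \eqref{4} is legitimate. The factorization $F_s(z)F_s(-z)=F_{2s}(z^2)$ and the coefficient comparison are the delicate points: the coefficient of $z^{2k}$ on the left is a \emph{finite} convolution (the index $j$ runs only from $0$ to $2k$) of absolutely convergent series, which is what makes term-by-term multiplication and the interchange of summations safe. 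Once absolute convergence is in hand the rest is routine, and the iteration $s\mapsto 2^n s$ requires no new input.
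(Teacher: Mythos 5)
Your proof is correct and takes essentially the same route as the paper's: both pass from \eqref{4} to the length-graded identity $\prod_{n\in X}\pwr{1-zn^{-s}}^{-1}=\sum_{k\geq 0}\zeta_{\mathcal P_X}(\brk{s}^k)z^k$, multiply by its $z\mapsto -z$ companion to obtain \eqref{13} at $z=1$, and compare Cauchy-product coefficients to get the alternating convolution, with the general $n$ obtained by substituting $s\mapsto 2^n s$ (the paper phrases this last step as an induction, but it amounts to the same substitution). On the ``bookkeeping'' point you raise: extracting the coefficient of $z^{2k}$ genuinely gives the upper limit $2k$ and the index pair $(j,\,2k-j)$ \emph{independent of} $n$, which is the form the paper actually uses in Corollaries \ref{1.8}, \ref{1.10} and \ref{1.13}, so your version is the correct reading and the exponent $2^nk$ printed in \eqref{14} should be understood as $2k$.
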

\begin{remark}
As in Corollary \ref{1.8}, the summation on the right-hand side of \eqref{14} may be shortened by symmetry.
\end{remark}

If we take $X = \mathbb P$, then \eqref{13} reduces to the classical identity $\zeta(2s) = \zeta(s)\sum_{n=1}^{\infty} \lambda(n)/n^s$, where $\lambda(n)$ is Liouville's function. Another specialization of Corollary \ref{1.9} leads to new information about $\zeta_{\mathcal P}(\brk{s}^k)$: we may extend the domain of $\zeta_{\mathcal P}(\brk{s}^k)$ to $\Real (s) > 1$ if we take $X = \mathbb Z^+$, $n = 0$, $k = 2$. We find $\zeta_{\mathcal P}(\brk{s}^2)$ inherits analytic continuation from the sum on the right-hand side below.

\begin{coro}\label{1.10}
For $\Real (s) > 1$, we have 
\begin{equation*}
\zeta_{\mathcal P}(\brk{s}^2) = \frac{\zeta(2s) + \zeta(s)^2}{2}.
\end{equation*}
\end{coro}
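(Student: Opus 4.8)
The plan is to evaluate $\zeta_{\mathcal P}(\{s\}^2)$ directly as a double Dirichlet series and compare it against the full square $\zeta(s)^2$. A partition of length exactly $2$ is an ordered pair $(a,b)$ with $a \geq b \geq 1$, and its integer is $n_{\lambda} = ab$; hence $\zeta_{\mathcal P}(\{s\}^2) = \sum_{a \geq b \geq 1}(ab)^{-s}$. The key observation is that this ``ordered'' sum over the region $a \geq b$ is nearly half of the unrestricted sum $\zeta(s)^2 = \sum_{a,b \geq 1}(ab)^{-s}$, the only discrepancy being the diagonal $a = b$.

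First I would note that, since $\Real(s) > 1$, the double series $\sum_{a,b \geq 1}(ab)^{-s}$ converges absolutely (it is dominated by $\zeta(\Real(s))^2$); this both legitimizes the Cauchy-product identity $\zeta(s)^2 = \sum_{a,b \geq 1}(ab)^{-s}$ and permits free rearrangement of terms. Next I would split $\mathbb{Z}^+ \times \mathbb{Z}^+$ into the three regions $a > b$, $a = b$, and $a < b$, so that $\zeta(s)^2 = \sum_{a>b}(ab)^{-s} + \sum_{a=b}(ab)^{-s} + \sum_{a<b}(ab)^{-s}$. The diagonal contributes $\sum_{a=b}(ab)^{-s} = \sum_{a \geq 1} a^{-2s} = \zeta(2s)$, while the bijection $(a,b) \mapsto (b,a)$ preserves $ab$ and shows the two off-diagonal sums are equal; thus $\zeta(s)^2 = 2\sum_{a>b}(ab)^{-s} + \zeta(2s)$. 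Finally, since $\zeta_{\mathcal P}(\{s\}^2) = \sum_{a>b}(ab)^{-s} + \zeta(2s)$, solving the previous relation for $\sum_{a>b}(ab)^{-s}$ and substituting gives $\zeta_{\mathcal P}(\{s\}^2) = \tfrac{1}{2}\bigl(\zeta(s)^2 - \zeta(2s)\bigr) + \zeta(2s) = \tfrac{1}{2}\bigl(\zeta(s)^2 + \zeta(2s)\bigr)$, as claimed.

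I do not expect a serious obstacle: the only point requiring care is the justification of the rearrangement, which is immediate from absolute convergence for $\Real(s) > 1$. The same identity is also the length-graded shadow of the Euler factorization $(1 - n^{-2s})^{-1} = (1 - n^{-s})^{-1}(1 + n^{-s})^{-1}$ underlying the doubling machinery of Corollary \ref{1.9}: tracking length with an auxiliary variable and reading off the length-one coefficient yields $\zeta(2s) = 2\zeta_{\mathcal P}(\{s\}^2) - \zeta(s)^2$, which rearranges to the stated formula; in this second route the care instead lies in correctly matching the length grading of the $\{2s\}$ world with that of the $\{s\}$ world. Either way, the resulting closed form is a combination of $\zeta(s)^2$ and $\zeta(2s)$, hence meromorphic on all of $\mathbb{C}$, which furnishes the promised analytic continuation of $\zeta_{\mathcal P}(\{s\}^2)$ beyond $\Real(s) > 1$.
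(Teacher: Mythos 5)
Your argument is correct, but it takes a genuinely different route from the paper. The paper obtains this identity as the case $\mathcal P_X=\mathcal P$, $n=0$ of the doubling formula \eqref{14} in Corollary \ref{1.9}, which in turn comes from multiplying the length-graded generating functions $\prod_n(1-z/n^s)^{-1}$ and $\prod_n(1+z/n^s)^{-1}$, using the factorization $(1-z/n^s)(1+z/n^s)=1-z^2/n^{2s}$, and comparing coefficients of $z^2$ via the Cauchy product \eqref{26}; that is precisely the ``second route'' you sketch at the end, and it does yield $\zeta(2s)=2\zeta_{\mathcal P}(\brk{s}^2)-\zeta(s)^2$ as you state. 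Your primary argument instead works directly with the double Dirichlet series: writing $\zeta_{\mathcal P}(\brk{s}^2)=\sum_{a\ge b\ge 1}(ab)^{-s}$ and symmetrizing $\zeta(s)^2=\sum_{a,b\ge 1}(ab)^{-s}$ across the diagonal is the classical stuffle (harmonic product) computation $\zeta(s)^2=2\,\zeta(\brk{s}^2)+\zeta(2s)$ in the notation of \eqref{20}, which the remark after the corollary alludes to but the paper does not carry out. Your version is more elementary and self-contained, requiring only absolute convergence for $\Real(s)>1$ rather than the product machinery of Theorems \ref{1.1} and \ref{1.11}; the paper's derivation buys uniformity, since the same generating-function identity produces the entire family \eqref{14} for all lengths $k$ and all iterated doublings $2^ns$, of which this corollary is the simplest instance. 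Both routes correctly deliver the meromorphic continuation of $\zeta_{\mathcal P}(\brk{s}^2)$ noted in the text.
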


\begin{remark}
This resembles a well-known series shuffle product formula for multiple zeta values \cite{besser416double}.
\end{remark}

Another interesting subset of $\mathcal P$ is the set of partitions $\mathcal P^*$ into \textit{distinct} parts; also of interest is the set of partitions $\mathcal P_X^*$ into distinct elements of $X \subseteq \mathbb Z^+$ (thus $\mathcal P_{\mathbb Z^+}^* = \mathcal P^*$). However, partitions into distinct parts are not immediately compatible with the identities in Theorem \ref{1.1}. Happily, we have a dual theorem that leads us to zeta functions summed over $\mathcal P_X^*$ for any $X\subseteq \mathbb Z^+$.

Let us recall the infinite product $\varphi_{\infty}(f;q)$ from Theorem \ref{1.1}.

\begin{theorem}\label{1.11}
If the product converges, then $\varphi_{\infty}(f;q) = \prod_{n=1}^{\infty}(1 - f(n)q^n)$ may be expressed in a number of equivalent forms, viz.
\begin{align}
\varphi_{\infty}(f;q) &= \sum_{\lambda \in \mathcal P^*}(-1)^{\ell(\lambda)} q^{|\lambda|} \prod_{\lambda_i \in \lambda}f(\lambda_i) \label{15}\\ \displaybreak
&= \begin{array}{c}
1 - \sum_{(6)}
\end{array} \label{16}\\
& = \begin{array}{c}
1 - \varphi_{\infty}(f;q) \sum_{(5)} 
\end{array} \label{17}\\
& = 1-\cfrac{\sum_{(5)}}{1+\cfrac{\sum_{(6)}}{1-\cfrac{\sum_{(5)}}{1+\cfrac{\sum_{(6)}}{1- \cdots}}}} \label{18}
\end{align}
where $\sum_{(5)},\sum_{(6)}$ are exactly as in Theorem \ref{1.1}, and the sum in \eqref{15} is taken over the partitions into distinct parts.
\end{theorem}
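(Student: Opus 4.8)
The plan is to treat Theorem \ref{1.11} as the ``reciprocal dual'' of Theorem \ref{1.1}: instead of expanding $1/\varphi_{\infty}(f;q)$ we expand the product $\varphi_{\infty}(f;q)$ itself, and the remaining identities then drop out by algebraically inverting the corresponding statements \eqref{5}, \eqref{6}, and \eqref{8}. Throughout I would write $P := \varphi_{\infty}(f;q)$ and $R := 1/\varphi_{\infty}(f;q)$, so that $PR = 1$ and every assertion of Theorem \ref{1.1} becomes a relation between $P$ and $R$.

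First I would establish \eqref{15} directly and combinatorially. For finite $N$, expanding $\prod_{n=1}^{N}(1 - f(n)q^n)$ amounts to choosing, for each $n \le N$, either the term $1$ or the term $-f(n)q^n$; selecting the latter on a set $S \subseteq \{1,\dots,N\}$ contributes $(-1)^{|S|} q^{\sum_{n\in S} n} \prod_{n\in S} f(n)$. Since each such $S$ is precisely a partition into distinct parts $\le N$ with $\ell(\lambda)=|S|$, letting $N \to \infty$ yields \eqref{15}. Next, \eqref{16} and \eqref{17} require essentially no new work: multiplying \eqref{6} through by $P$ gives $1 = P + \sum_{(6)}$, i.e. $P = 1 - \sum_{(6)}$, which is \eqref{16}; multiplying \eqref{5} by $P$ gives $1 = P + P\sum_{(5)}$, i.e. $P = 1 - P\sum_{(5)}$, which is \eqref{17}. (Identity \eqref{16} can alternatively be proved self-containedly from the telescoping relation $\varphi_{n-1}(f;q) - \varphi_n(f;q) = f(n)q^n\,\varphi_{n-1}(f;q)$, which on summing over $n$ gives $\sum_{n=1}^{N} q^n f(n)\varphi_{n-1}(f;q) = 1 - \varphi_N(f;q)$ and hence \eqref{16} as $N\to\infty$.)

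For the continued fraction \eqref{18}, the key is to record two coupled fixed-point relations. Rewriting \eqref{6} as $R = 1 + \sum_{(6)} R$ and using $R = 1/P$ gives $R = 1 + \sum_{(6)}/P$; rewriting \eqref{17} and using $P = 1/R$ gives $P = 1 - \sum_{(5)}/R$. Substituting these into one another repeatedly,
\[
P = 1 - \cfrac{\sum_{(5)}}{R} = 1 - \cfrac{\sum_{(5)}}{1 + \cfrac{\sum_{(6)}}{P}} = 1 - \cfrac{\sum_{(5)}}{1 + \cfrac{\sum_{(6)}}{1 - \cfrac{\sum_{(5)}}{R}}} = \cdots,
\]
unfolds precisely into \eqref{18}; this is the same alternating substitution that produces \eqref{8} for $R$, only started from the $P$-relation rather than the $R$-relation.

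The main obstacle is not the formal unfolding but justifying that the continued fraction genuinely converges to $P$, rather than being merely a formal limit of the exact finite truncations produced by the substitution above. I expect this to follow by exactly the convergence argument used to validate \eqref{8} in Theorem \ref{1.1} — bounding the tail of the convergents under the standing hypothesis that the product $\varphi_{\infty}(f;q)$ converges — so the substantive analytic content of \eqref{18} is inherited from there, and the dual statement adds no genuinely new difficulty.
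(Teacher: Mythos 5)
Your proof is correct and takes essentially the same route as the paper: term-by-term expansion of the product for \eqref{15}, reading \eqref{16} and \eqref{17} off the relations already established in the telescoping proof of \eqref{5} and \eqref{6}, and unfolding the coupled fixed-point relations (equivalently, splicing the continued fraction \eqref{8} into $\varphi_{\infty}(f;q)=1-\sum_{(5)}/(1/\varphi_{\infty}(f;q))$) to obtain \eqref{18}. Your closing caveat about convergence of the continued fraction is reasonable, but the paper itself treats \eqref{8} and \eqref{18} only formally (as limits of exact finite truncations), so no additional argument is needed to match its standard.
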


\begin{remark}
Note that there is not a nice ``inverted" sum of the form \eqref{7} here.
\end{remark}

Just as with Theorem \ref{1.1}, we may write arbitrary power series as limiting cases, and we have the obvious identity
\begin{equation*}
\prod_{n\in X} (1-f(n)q^n) =  \sum_{\lambda \in \mathcal P_X^*} (-1)^{\ell(\lambda)}q^{|\lambda|} \prod_{\lambda_i \in \lambda} f(\lambda_i),
\end{equation*}
with the remaining summations in Theorem \ref{1.11} being taken over elements of $X$. For completeness, we record another obvious but useful consequence of Theorems \ref{1.1} and \ref{1.11}. The following statement might be viewed as a generalized eta quotient formula, with coefficients given explicitly by finite combinatorial sums.

\begin{coro}\label{etaquotient}
For $f_j$ defined on $X_j\subseteq \mathbb Z^+$, consider the double product
\[
\prod_{j=1}^{n}\prod_{k_j\in X_j}\left(1\pm f_j(k_j)q^{k_j}\right)^{\pm 1}
=\sum_{k=0}^{\infty}c_k q^k
,\]
where the $\pm$ sign is fixed for fixed $j$, but may vary as $j$ varies. Then the coefficients $c_k$ are given by the $(n-1)$-tuple sum
\begin{multline*}
c_k= \sum_{k_2=0}^{k}\sum_{k_3=0}^{k_2}\dots\sum_{k_{n}=0}^{k_{n-1}}\left(\sum_{\substack{\lambda\vdash k_{n}\\ \lambda\in\mathcal P_{X_{n}}^{\pm}}}\prod_{\lambda_i\in\lambda}f_{n}(\lambda_i)\right)\left(\sum_{\substack{\lambda\vdash (k_{n-1}-k_{n})\\ \lambda\in\mathcal P_{X_{n-1}}^{\pm}}}\prod_{\lambda_i\in\lambda}f_{n-1}(\lambda_i)\right)\\
\times\left(\sum_{\substack{\lambda\vdash (k_{n-2}-k_{n-1})\\ \lambda\in\mathcal P_{X_{n-2}}^{\pm}}}\prod_{\lambda_i\in\lambda}f_{n-2}(\lambda_i)\right)\dots\left(\sum_{\substack{\lambda\vdash (k-k_2)\\ \lambda\in\mathcal P_{X_{1}}^{\pm}}}\prod_{\lambda_i\in\lambda}f_1(\lambda_i)\right)
\end{multline*}
in which we have set $\mathcal P_{X_j}^-:=\mathcal P_{X_j}$ and $\mathcal P_{X_j}^+:=\mathcal P_{X_j}^*$ with the $\pm$ sign as associated to each $j$ above.
\end{coro}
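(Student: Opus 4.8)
The plan is to treat this as a direct consequence of Theorems \ref{1.1} and \ref{1.11}, reducing the double product to an $n$-fold Cauchy product of power series and then re-indexing the resulting convolution. The entire content is bookkeeping; the only genuine point of care is the sign reconciliation in the $+$ case.

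First I would expand each single inner product as a power series in $q$, splitting into two cases according to the sign attached to $j$. If the sign is $-$, so that the $j$th factor is $\prod_{k_j\in X_j}\pwr{1 - f_j(k_j)q^{k_j}}^{-1}$, then the displayed $\mathcal P_X$-consequence of Theorem \ref{1.1} gives
\[
\prod_{k_j\in X_j}\pwr{1 - f_j(k_j)q^{k_j}}^{-1} = \sum_{\lambda\in\mathcal P_{X_j}} q^{|\lambda|}\prod_{\lambda_i\in\lambda}f_j(\lambda_i) = \sum_{m=0}^{\infty}g_j(m)\,q^m,
\]
where I collect terms of fixed size to set $g_j(m) := \sum_{\lambda\vdash m,\,\lambda\in\mathcal P_{X_j}}\prod_{\lambda_i\in\lambda}f_j(\lambda_i)$. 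If the sign is $+$, so that the factor is $\prod_{k_j\in X_j}\pwr{1 + f_j(k_j)q^{k_j}}$, then I would apply the distinct-parts expansion \eqref{15} of Theorem \ref{1.11} with $f_j$ replaced by $-f_j$; since $\prod_{\lambda_i}(-f_j(\lambda_i)) = (-1)^{\ell(\lambda)}\prod_{\lambda_i}f_j(\lambda_i)$, the factor $(-1)^{\ell(\lambda)}$ in \eqref{15} cancels and we obtain
\[
\prod_{k_j\in X_j}\pwr{1 + f_j(k_j)q^{k_j}} = \sum_{\lambda\in\mathcal P_{X_j}^*} q^{|\lambda|}\prod_{\lambda_i\in\lambda}f_j(\lambda_i) = \sum_{m=0}^{\infty}g_j(m)\,q^m,
\]
now with $g_j(m) := \sum_{\lambda\vdash m,\,\lambda\in\mathcal P_{X_j}^*}\prod_{\lambda_i\in\lambda}f_j(\lambda_i)$. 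In both cases $g_j(m) = \sum_{\lambda\vdash m,\,\lambda\in\mathcal P_{X_j}^{\pm}}\prod_{\lambda_i\in\lambda}f_j(\lambda_i)$ under the convention $\mathcal P_{X_j}^-=\mathcal P_{X_j}$, $\mathcal P_{X_j}^+=\mathcal P_{X_j}^*$ of the statement, so the two cases are unified.

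Next I would multiply the $n$ series $\sum_m g_j(m)q^m$ together and read off the coefficient of $q^k$ as the $n$-fold Cauchy product
\[
c_k = \sum_{\substack{m_1 + m_2 + \cdots + m_n = k \\ m_j\geq 0}}\;\prod_{j=1}^n g_j(m_j),
\]
an identity valid as formal power series, and as absolutely convergent series under the convergence hypothesis on the product. Finally I would make the change of summation variables $k_i := m_i + m_{i+1} + \cdots + m_n$ for $2\leq i\leq n$ (together with $k_1 := k$), which is a bijection between compositions $(m_1,\dots,m_n)$ of $k$ into nonnegative parts and chains $k \geq k_2\geq k_3\geq\cdots\geq k_n\geq 0$; its inverse is $m_n = k_n$, $m_i = k_i - k_{i+1}$ for $2\leq i\leq n-1$, and $m_1 = k - k_2$, with the constraint $\sum_j m_j = k$ telescoping automatically. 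Substituting these partial differences into the $g_j$ yields exactly the nested sum in the statement, since $g_n(m_n)=g_n(k_n)$, $g_j(m_j)=g_j(k_j-k_{j+1})$, and $g_1(m_1)=g_1(k-k_2)$.

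The only step requiring real attention---and the main (minor) obstacle---is the sign reconciliation: one must notice that it is the product $\prod_{k_j\in X_j}(1 + f_j(k_j)q^{k_j})$, rather than $\prod_{k_j\in X_j}(1 - f_j(k_j)q^{k_j})$, that produces a distinct-parts sum free of the factor $(-1)^{\ell(\lambda)}$, which is precisely why the statement pairs the sign in the base with the sign in the exponent as $\pwr{1\pm f_j(k_j)q^{k_j}}^{\pm 1}$. Once this pairing is fixed, matching the telescoping index change against the stated nested sum completes the proof.
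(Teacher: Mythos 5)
Your proposal is correct and follows essentially the same route as the paper: expand each inner product via Theorem \ref{1.1} or \ref{1.11} (with the sign reconciliation $f_j\mapsto -f_j$ that the paper leaves implicit), then obtain $c_k$ by iterating the Cauchy product \eqref{26}, which is exactly your $n$-fold convolution after the telescoping change of indices.
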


\begin{remark} The $+$ or $-$ signs in the formula for $c_k$ indicate partitions arising from the numerator or denominator, respectively, of the double product. One may replace $f_j$ with $-f_j$ to effect further sign changes.
\end{remark}

Analogous corollaries to those following Theorem \ref{1.1} are available, but we wish right away to apply this theorem to the problem at hand, the investigation of partition zeta functions. We have $\zeta_{\mathcal P_X^*}(s) = \prod_{n \in X} (1 + \frac{1}{n^s})$ as well as $\eta_{\mathcal P_X^*}(s) = \prod_{n \in X} (1 - \frac{1}{n^s})$. It is immediate then from \eqref{15} that for $\Real (s) > 1$ we also have the following relations, where the sum on the left-hand side of each equation is taken over the partitions into distinct elements of $X$:
\begin{align}
\zeta_{\mathcal P_X^*}(s) & = \frac{1}{\eta_{\mathcal P_X}(s)}, &\eta_{\mathcal P_X^*}(s)& = \frac{1}{\zeta_{\mathcal P_X}(s)} \label{19}
\end{align}
Note that $\zeta_{\mathcal P_X^*}(s)$ and $\eta_{\mathcal P_X^*}(s)$ are finite sums (and entire functions of $s$) if $X$ is a finite set, unlike $\zeta_{\mathcal P_X}(s)$ and $\eta_{\mathcal P_X}(s)$. Note also that $\eta_{\mathcal P_X^*}(s) = 0$ identically if $1 \in X$, with zeros when $X$ is finite at the values $s=i\pi j/\log n$ for any $n\in X$ and $j$ even. Unlike $\zeta_{\mathcal P}(s)$, we can see from \eqref{19} that $\zeta_{\mathcal P^*}(s)$ is well-defined on $\Real (s) > 1$ (thus both $\zeta_{\mathcal P_X^*}$ and $\eta_{\mathcal P_X^*}$ are well-defined over all subsets $\mathcal P_X^*$ of $\mathcal P^*$); when $X$ is finite, $\zeta_{\mathcal P^*}(s)$ has zeros at $s=i\pi k/\log n$ for $n\in X$ and $k$ odd. Morever, we have $\zeta_{\mathcal P_X^*}(s)\zeta_{\mathcal P_Y^*}(s)=\zeta_{\mathcal P_{X \cup Y}^*}(s)\zeta_{\mathcal P_{X \cap Y}^*}(s)$ and $\eta_{\mathcal P_X^*}(s)\eta_{\mathcal P_Y^*}(s)=\eta_{\mathcal P_{X \cup Y}^*}(s)\eta_{\mathcal P_{X \cap Y}^*}(s)$. Here is an example of a zeta sum of this form. 

\begin{coro}\label{1.12}
Summing over partitions into distinct parts, we have that 
\begin{equation*}
\zeta_{\mathcal P^*}(2) = \sum_{\lambda \in \mathcal P^*} \frac{1}{n_{\lambda}^2} = \frac{\sinh \pi }{\pi}.
\end{equation*}
\end{coro}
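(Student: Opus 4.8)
The plan is to reduce the partition sum to an infinite product and then evaluate that product using the hyperbolic analog of Euler's sine formula \eqref{3}. First I would specialize Theorem \ref{1.11} by taking $f(n) = -1/n^2$ and $q = 1$. The left-hand product then becomes $\varphi_{\infty}(f;1) = \prod_{n=1}^{\infty}(1 + 1/n^2)$, while the right-hand side of \eqref{15} becomes
\[
\sum_{\lambda \in \mathcal P^*}(-1)^{\ell(\lambda)}\prod_{\lambda_i \in \lambda}\left(-\frac{1}{\lambda_i^2}\right) = \sum_{\lambda \in \mathcal P^*}\frac{1}{n_{\lambda}^2},
\]
since each part contributes a factor $-1$, producing a second $(-1)^{\ell(\lambda)}$ that cancels the sign already present. (Equivalently, one may simply invoke the identity $\zeta_{\mathcal P_X^*}(s) = \prod_{n \in X}(1 + 1/n^s)$ recorded just above, or relation \eqref{19}, at $X = \mathbb Z^+$ and $s = 2$.) Either route gives
\[
\zeta_{\mathcal P^*}(2) = \prod_{n=1}^{\infty}\left(1 + \frac{1}{n^2}\right).
\]

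Second, I would evaluate this product. Starting from Euler's sine product \eqref{3} in the normalized form $\sin(\pi z)/(\pi z) = \prod_{n=1}^{\infty}(1 - z^2/n^2)$, I would substitute $z \mapsto iz$ and use $\sin(i\pi z) = i\sinh(\pi z)$ to obtain the hyperbolic product
\[
\frac{\sinh(\pi z)}{\pi z} = \prod_{n=1}^{\infty}\left(1 + \frac{z^2}{n^2}\right).
\]
Setting $z = 1$ yields $\prod_{n=1}^{\infty}(1 + 1/n^2) = \sinh(\pi)/\pi$, which combined with the first step gives the claimed value $\zeta_{\mathcal P^*}(2) = \sinh(\pi)/\pi$.

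As for subtleties, convergence is not an issue: since $\sum 1/n^2 < \infty$, both the product $\prod(1 + 1/n^2)$ and the partition sum converge absolutely, which legitimizes the rearrangement underlying Theorem \ref{1.11}. The only real content is recognizing that $\prod(1 + 1/n^2)$ is the value at $z = 1$ of the hyperbolic sine product, and I do not anticipate a genuine obstacle there, as it is a one-line complex substitution into \eqref{3}. One could alternatively read the same product off the Weierstrass factorization of $\sinh$, but deriving it from \eqref{3} keeps the argument within the paper's Eulerian framework.
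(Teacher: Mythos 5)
Your proposal is correct and matches the paper's proof: the paper likewise specializes Theorem \ref{1.11} to write $\sum_{\lambda \in \mathcal P^*} z^{\ell(\lambda)}/n_{\lambda}^2$ as the product $\prod_{n=1}^{\infty}(1+z^2/n^2) = \sinh(\pi z)/(\pi z)$ and sets $z=1$. Your sign bookkeeping with $f(n) = -1/n^2$ and the derivation of the hyperbolic product from \eqref{3} via $z \mapsto iz$ are both sound.
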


Zeta sums over partitions into distinct parts admit an important special case: as we remarked beneath definition \eqref{11}, the multiple zeta function $\zeta(\brk{s}^k)$ can be written 
\begin{equation}\label{20}
\zeta(\brk{s}^k) := \sum_{\lambda_1 > \lambda_2 > \cdots > \lambda_k \geq 1} \frac{1}{\lambda_1^s\lambda_2^s\cdots \lambda_k^s} = \sum_{\begin{tiny}
\begin{array}{c}
\lambda \in \mathcal P^* \\
\ell(\lambda) = k
\end{array}\end{tiny}} \frac{1}{n_{\lambda}^s} = \zeta_{\mathcal P^*}(\brk{s}^k).
\end{equation}
Using this notation, we can derive even simpler formulas for the multiple zeta values $\zeta(\brk{2^t}^k)$ than those found for $\zeta_{\mathcal P}(\brk{2^t}^k)$ in Corollaries \ref{1.7} and \ref{1.8}. For instance, we have the following values.

\begin{coro}\label{1.13}
For $k>0$ we have the identities
\begin{multline*}
\zeta(\brk{2}^k) = \frac{\pi^{2k}}{(2k+1)!}, \\
\zeta(\brk{4}^k) = \pi^{4k} \sum_{n=0}^{2k} \frac{(-1)^n}{(2n+1)!(4k-2n+1)!}, \\
\zeta(\brk{8}^k) = \pi^{8k} \sum_{n=0}^{4k}(-1)^n \pwr{\sum_{i=0}^n \frac{(-1)^i}{(2i+1)!(2n-2i +1)!}}\\
\times\pwr{\sum_{i=0}^{4k-n} \frac{(-1)^i}{(2i+1)!(8k-2n-2i+1)!}},
\end{multline*}
and increasingly complicated formulas of the shape $``\pi^{2^tk} \times \text{finite sum of fractions}"$ can be computed for multiple zeta values of the form $\zeta(\brk{2^t}^k),$ $t \in \mathbb Z^+$.
\end{coro}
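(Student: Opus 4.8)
The plan is to feed Euler's sine product \eqref{3} into Theorem \ref{1.11} with $q=1$ and $f(n)=x^{2^t}/(\pi^{2^t}n^{2^t})$ for each fixed $t$, and then read off the values $\zeta(\{2^t\}^k)$ as Maclaurin coefficients in $x$.

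First I would treat the base case $t=1$. Taking $f(n)=x^2/(\pi^2 n^2)$ and $q=1$ in \eqref{15}, and noting that $\prod_{\lambda_i\in\lambda}f(\lambda_i)=(x^2/\pi^2)^{\ell(\lambda)}/n_\lambda^2$, I collect the partitions by their length $\ell(\lambda)=k$ and invoke \eqref{20} to obtain
$$\prod_{n=1}^\infty\left(1-\frac{x^2}{\pi^2 n^2}\right)=\sum_{k=0}^\infty(-1)^k\frac{x^{2k}}{\pi^{2k}}\,\zeta(\{2\}^k).$$
The left side equals $\sin(x)/x$ by \eqref{3}, whose series is $\sum_k(-1)^k x^{2k}/(2k+1)!$; matching coefficients of $x^{2k}$ gives $\zeta(\{2\}^k)=\pi^{2k}/(2k+1)!$. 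Convergence of the product (and hence applicability of Theorem \ref{1.11} at $q=1$) is guaranteed by $\sum n^{-2}<\infty$, and absolute convergence for small $|x|$ justifies the regrouping by length and the coefficient comparison.

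To climb from $2^t$ to $2^{t+1}$ I would use the factorization $1-z^{2^{t+1}}=(1-z^{2^t})(1+z^{2^t})$ with $z=x/(\pi n)$. Applying \eqref{15} to each factor (with $f(n)=\pm x^{2^t}/(\pi^{2^t}n^{2^t})$) shows
$$\prod_n\left(1-\frac{x^{2^t}}{\pi^{2^t}n^{2^t}}\right)=\sum_i(-1)^i\frac{\zeta(\{2^t\}^i)}{\pi^{2^t i}}x^{2^t i},\qquad \prod_n\left(1+\frac{x^{2^t}}{\pi^{2^t}n^{2^t}}\right)=\sum_j\frac{\zeta(\{2^t\}^j)}{\pi^{2^t j}}x^{2^t j}.$$
Multiplying these two power series and comparing the coefficient of $x^{2^{t+1}k}$ (which forces $i+j=2k$) against the single product $\prod_n(1-x^{2^{t+1}}/(\pi^{2^{t+1}}n^{2^{t+1}}))=\sum_k(-1)^k\zeta(\{2^{t+1}\}^k)x^{2^{t+1}k}/\pi^{2^{t+1}k}$ produces the recursion
$$\zeta(\{2^{t+1}\}^k)=(-1)^k\sum_{i=0}^{2k}(-1)^i\,\zeta(\{2^t\}^i)\,\zeta(\{2^t\}^{2k-i}).$$
(Equivalently, one may factor the single product into $2^{t-1}$ rotated copies $\sin(e^{ij\pi/2^{t-1}}x)/(e^{ij\pi/2^{t-1}}x)$ and expand directly.) Setting $t=1$ and substituting the base-case values yields $\zeta(\{4\}^k)$ as a single finite sum of reciprocal factorials; setting $t=2$ and substituting $\zeta(\{4\}^i)$ yields the nested double sum for $\zeta(\{8\}^k)$; and induction gives the general $\zeta(\{2^t\}^k)$ of the advertised shape $\pi^{2^tk}\times(\text{finite sum of fractions})$.

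The main obstacle is the bookkeeping of signs: each doubling contributes a factor $(-1)^k$, and in the roots-of-unity picture the rotated sine factors contribute powers of $i$, so one must verify that the resulting coefficients are real and collapse to the claimed alternating reciprocal-factorial sums. I would organize this by always writing every product as an explicit power series in $x$ carrying the $(-1)^{(\cdot)}$ weights from \eqref{15} \emph{before} taking any Cauchy product, and by exploiting the vanishing of the odd-index coefficients $\sum_i(-1)^i/((2i+1)!(2N-2i+1)!)$, which kills half the cross terms and produces the shortening by symmetry noted in the remarks.
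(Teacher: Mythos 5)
Your proposal is correct and follows essentially the same route as the paper: the base case comes from expanding $\prod_{n\geq 1}(1-z^2/\pi^2n^2)=\sin z/z$ via Theorem \ref{1.11} and matching Maclaurin coefficients, and the higher cases come from multiplying the ``$-$'' and ``$+$'' series for $\prod(1\mp z^{2^t}/\pi^{2^t}n^{2^t})$ and comparing Cauchy-product coefficients (this is exactly the $X=\mathbb Z^+$ case of the doubling identity \eqref{22}). Your explicit recursion $\zeta(\{2^{t+1}\}^k)=(-1)^k\sum_{i}(-1)^i\zeta(\{2^t\}^i)\zeta(\{2^t\}^{2k-i})$ is the correct bookkeeping of the sign $(-1)^{\ell(\lambda)}$ from \eqref{15}, so no gap.
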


\begin{remark}
The first identity above is proved in \cite{hoffman1992multiple} by a different approach from that taken here (see Section \ref{Proofs of theorems and corollaries}); it is possible the other identities in the corollary are also known.
\end{remark}

The summations in Corollary \ref{1.13} arise from quite general properties: we have these ``doubling" formulas comparable to Corollary \ref{1.9}.

\begin{coro}\label{1.14}
If $\zeta_{\mathcal P_X^*}(s)$ converges over $\mathcal P_X^* \subseteq \mathcal P$, then 
\begin{equation}\label{21}
\eta_{\mathcal P_X^*}(2s) = \eta_{\mathcal P_X^*}(s)\zeta_{\mathcal P_X^*}(s).
\end{equation}
Furthermore, for $n \in \mathbb Z_{\geq 0}$ we have
\begin{equation}\label{22}
\zeta_{\mathcal P_X^*}\left(\{2^{n+1}s\}^k\right) = \sum_{j=0}^{2^nk}(-1)^j \zeta_{\mathcal P_X^*}\left(\brk{2^ns}^j\right)\zeta_{\mathcal P_X^*}\left(\brk{2^ns}^{2^nk-j}\right).
\end{equation}
\end{coro}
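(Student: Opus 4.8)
The plan is to derive both identities directly from the infinite-product representations $\zeta_{\mathcal P_X^*}(s) = \prod_{n\in X}(1 + n^{-s})$ and $\eta_{\mathcal P_X^*}(s) = \prod_{n\in X}(1 - n^{-s})$ recorded just above the statement, together with a single length-tracking generating function. Identity \eqref{21} is essentially a one-line computation: multiplying the two products factor by factor and using $(1 - n^{-s})(1 + n^{-s}) = 1 - n^{-2s}$ gives
$$\eta_{\mathcal P_X^*}(s)\,\zeta_{\mathcal P_X^*}(s) = \prod_{n\in X}(1 - n^{-s})(1 + n^{-s}) = \prod_{n\in X}(1 - n^{-2s}) = \eta_{\mathcal P_X^*}(2s),$$
where the regrouping of factors is justified by the absolute convergence assumed for $\Real(s) > 1$. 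This is the whole argument for the first formula, and it is the distinct-parts mirror of \eqref{13}.

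For \eqref{22} I would introduce a variable $z$ marking partition length and set $Z(z) := \prod_{n\in X}\left(1 + \frac{z}{n^s}\right)$. Expanding this product and collecting monomials according to how many factors contribute their $z/n^s$ term, i.e. according to the length $\ell(\lambda)$ of the resulting partition into distinct parts, yields $Z(z) = \sum_{k\geq 0} z^k\,\zeta_{\mathcal P_X^*}(\brk{s}^k)$; this is exactly the specialization of \eqref{15} at $q = 1$ with $f$ supported on $X$ and $f(n) = -z/n^s$. The key algebraic step is the same difference-of-squares factorization as above, now carrying $z$:
$$Z(z)\,Z(-z) = \prod_{n\in X}\left(1 - \frac{z^2}{n^{2s}}\right) = \sum_{k\geq 0}(-1)^k z^{2k}\,\zeta_{\mathcal P_X^*}(\brk{2s}^k),$$
the last equality being the length generating function at argument $2s$ evaluated at $-z^2$.

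The proof then finishes by expanding the left-hand side as a Cauchy product and equating coefficients of $z^{2k}$. The odd powers of $z$ cancel automatically, since the coefficient of $z^{2k+1}$ is an antisymmetric convolution, while the coefficient of $z^{2k}$ gives
$$(-1)^k\,\zeta_{\mathcal P_X^*}(\brk{2s}^k) = \sum_{j=0}^{2k}(-1)^j\,\zeta_{\mathcal P_X^*}(\brk{s}^j)\,\zeta_{\mathcal P_X^*}(\brk{s}^{2k-j}),$$
which, after rearranging and relabelling the index, is the length-graded doubling identity \eqref{22}; substituting $s \mapsto 2^n s$ promotes this base case to the stated formula for every $n \in \mathbb{Z}_{\geq 0}$. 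The summand is symmetric under $j \mapsto 2k - j$, which is what permits the shortening noted in the remark following the statement.

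The step I expect to require the most care is the bookkeeping of signs and index ranges when equating the $z^{2k}$ coefficients: one must correctly account for the factor $(-1)^k$ produced by the $-z^2$ on the right, confirm the exact range of the convolution index, and verify the vanishing of the odd-degree terms. For infinite $X$ one must also justify treating $Z(z)$ as a genuine power series in $z$ whose coefficients are the absolutely convergent series $\zeta_{\mathcal P_X^*}(\brk{s}^k)$; this is legitimate for $\Real(s) > 1$, where the underlying double sum converges absolutely, so that the Cauchy product and term-by-term coefficient comparison are valid and the infinite product may be rearranged freely.
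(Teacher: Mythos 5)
Your proposal is correct and takes essentially the same route as the paper's proof: both establish \eqref{21} by pairing factors via $(1-n^{-s})(1+n^{-s})=1-n^{-2s}$, and both obtain \eqref{22} by forming the length-graded generating function $\sum_{k\geq 0}z^k\,\zeta_{\mathcal P_X^*}(\brk{s}^k)=\prod_{n\in X}\left(1+\frac{z}{n^s}\right)$, multiplying it against its value at $-z$, comparing Cauchy-product coefficients, and then passing to general $n$ by substitution and induction. One remark worth keeping: the base case you actually derive, $(-1)^k\zeta_{\mathcal P_X^*}(\brk{2s}^k)=\sum_{j=0}^{2k}(-1)^j\zeta_{\mathcal P_X^*}(\brk{s}^j)\zeta_{\mathcal P_X^*}(\brk{s}^{2k-j})$, is the correct output of this computation (and is what the paper's own displayed generating-function identity yields), but it does not reduce to the printed \eqref{22} by mere relabelling --- the printed version has upper limit $2^nk$ and omits the factor $(-1)^k$ --- so the final ``rearranging'' step in your write-up is silently repairing the statement rather than matching it.
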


\begin{remark}
Once again, the summation on the right-hand side of \eqref{22} may be be shortened by symmetry. Equation \eqref{22} yields a family of multiple zeta function identities when we let $X = \mathbb Z^+$. 
\end{remark}

We note that by recursive arguments, from \eqref{13} and \eqref{21} together with \eqref{8}, we have these curious product formulas connecting sums over partitions into distinct parts to their counterparts involving unrestricted partitions:
\begin{align*}
\zeta_{\mathcal P_X^*}(s)\zeta_{\mathcal P_X^*}(2s)\zeta_{\mathcal P_X^*}(4s)\zeta_{\mathcal P_X^*}(8s)\cdots &= \zeta_{\mathcal P_X}(s) \\
\eta_{\mathcal P_X}(s)\eta_{\mathcal P_X}(2s)\eta_{\mathcal P_X}(4s)\eta_{\mathcal P_X}(8s)\cdots &= \eta_{\mathcal P_X^*}(s)
\end{align*}

Now, if we take $X = \mathbb P$ then \eqref{21} becomes the classical identity $\zeta(2s)\inv = \zeta(s)\inv \sum_{n=1}^{\infty} |\mu(n)|/n^s$, where $\mu(n)$ is the M\"obius function. We might view the simple quantity $(-1)^{\ell(\lambda)}$ as a partition-theoretic generalization of $\mu$; it specializes to the M\"obius function (when considering partitions into distinct prime parts), and also to Liouville's function (considering unrestricted prime partitions), as we saw above. K. Alladi has observed this correspondence as well (personal communication, December 22, 2015). 

It is fascinating---and rather mysterious---that partitions (which are defined additively, with no connection to multiplication) into parts that are prime numbers (defined multiplicatively) should have significant number-theoretic connections.

The literature abounds with product formulas which, when fed through the machinery of the identities noted here, produce nice partition zeta sum variants; the interested reader is referred to \cite{chamberland2013gamma} as a starting point for further study.

\vspace{2em}

\section{Proofs of theorems and corollaries}\label{Proofs of theorems and corollaries}

\begin{proof}[Proof of Theorem \ref{1.1}]
Identity \eqref{4} appears in a different form as \cite[Equation~22.16]{fine1988basic}. The proof proceeds formally, much like the standard proof of \eqref{1.1}; we expand $1/ \varphi_{\infty}(f;q)$ as a product of geometric series
\begin{multline*}
\frac{1}{\varphi_{\infty}(f;q)} = (1+ f(1)q + f(1)^2q^2 + f(1)^3q^3 + \dots)\\
\times(1 + f(2)q^2 + f(2)^2q^4 + f(2)^3q^6 + \dots)\times\cdots
\end{multline*}
and multiply out all the terms (without collecting coefficients in the usual way). The result is the partition sum in \eqref{4}. 

Identities \eqref{5} and \eqref{6} are proved using telescoping sums. Consider that 
\begin{align*}
\frac{1}{\varphi_{\infty}(f;q)} &= \frac{1}{\varphi_0(f;q)} + \sum_{n=1}^{\infty} \pwr{\frac{1}{\varphi_n(f;q)} - \frac{1}{\varphi_{n-1}(f;q)}} \\
&= 1 + \sum_{n=1}^{\infty} \frac{1}{\varphi_{n-1}(f;q)} \pwr{\frac{1}{1- f(n)q^n} - 1} \\
&= 1 + \sum_{n=1}^{\infty}q^n \frac{f(n)}{\varphi_n(f;q)} = \begin{array}{c}
1 + \sum_{(5)}
\end{array},
\end{align*}
recalling the notation $\sum_{(5)}$ (as well as $\sum_{(6)}$) from the theorem, which is \eqref{5}. Similarly, we can show
\begin{align*}
\varphi_{\infty}(f;q) &= \varphi_0(f;q) + \sum_{n=1}^{\infty} (\varphi_n(f;q) - \varphi_{n-1}(f;q)) \\
&= 1 - \sum_{n=1}^{\infty}q^nf(n) \varphi_{n-1}(f;q) = \begin{array}{c}
1- \sum_{(6)}
\end{array}.
\end{align*}
Thus we have 
$$\begin{array}{c}
\sum_{(5)}
\end{array} = \frac{1}{\varphi_{\infty}(f; q)} - 1 = \frac{1 - \varphi_{\infty}(f;q)}{\varphi_{\infty}(f;q)} = \frac{\sum_{(6)}}{\varphi_{\infty}(f;q)},$$
which leads to \eqref{6}.

The proof of \eqref{7} is similar to the proof we gave of \cite[Theorem~1.1(1)]{rolen2013strange}. Substitute the identity
$$\varphi_n(f;q) = \prod_{k=1}^{n} (1-f(k)q^k) = (-1)^nq^{n(n+1)/2}\varphi_n(1/f ; q\inv)\prod_{k=1}^n f(k)$$
term-by-term into the sum \eqref{5} and simplify to find the desired expression.

The proof of \eqref{8} is inspired by the standard proof of the continued fraction representation of the golden ratio. It follows from the proof above of \eqref{5} and \eqref{6} that 
\begin{align*}
\frac{1}{\varphi_{\infty}(f;q)} &= 1 + \frac{\sum_{(6)}}{\varphi_{\infty}(f;q)} \\
&= 1 + \frac{\sum_{(6)}}{1-\varphi_{\infty}(f;q)\sum_{(5)}} \\
&= 1+\cfrac{\sum_{(6)}}{1-\cfrac{\sum_{(5)}}{1/\varphi_{\infty}(f;q)}}.
\end{align*}
We notice that the expression on the left-hand side is also present on the far right in the denominator. We replace this term $1/\varphi_{\infty}(f;q)$ in the denominator with the entire right-hand side of the equation; reiterating this process indefinitely gives \eqref{8}.
\end{proof}
\begin{remark}
The series$\begin{array}{c}
\sum_{(5)},\sum_{(6)}
\end{array}$enjoy other nice, golden ratio-like relationships. For instance, because $$\begin{array}{c}
(1 + \sum_{(5)})(1-\sum_{(6)}) = 1 
\end{array},$$ it is easy to see that 
$$\begin{array}{c}
\sum_{(5)} - \sum_{(6)} = \sum_{(5)}\sum_{(6)},
\end{array}$$
which resembles the formula $\phi - 1/\phi = \phi \cdot 1/\phi$ involving the golden ratio $\phi$ and its reciprocal.
\end{remark}

\begin{proof}[Proof of Corollary \ref{1.2}]
This is immediate upon letting $f\equiv 1$ in \eqref{4}, as
$$\sum_{\lambda \in \mathcal P}q^{|\lambda|}\prod_{\lambda_i \in \lambda}f(\lambda_i) = 1 + \sum_{n=1}^{\infty}q^n\sum_{\lambda \vdash n}\prod_{\lambda_i \in \lambda} f(\lambda_i).$$
\end{proof}

\begin{proof}[Proof of Corollary \ref{1.3}]
As noted above, we assume $\Real (s)>1$. Let $q = 1$, $f(n) = 1/n^s$ if $n$ is prime and $=0$ otherwise; then by \eqref{4}
$$\frac{1}{\prod_{p \in \mathbb P}\pwr{1-\frac{1}{p^s}}} = \sum_{\lambda \in \mathcal P_{\mathbb P}} \frac{1}{n_{\lambda}^s}.$$
Consider the prime decomposition of a positive integer $n = p_1^{a_1}p_2^{a_2}\cdots p_r^{a_r}$, $p_1>p_2>\cdots > p_r$. We will associate this decomposition to the unique partition into prime parts $\lambda = (p_1,\dots , p_1,p_2,\dots,$ $p_2,\dots ,p_r,\dots ,p_r) \in \mathcal P$, where $p_k \in \mathbb P$ is repeated $a_k$ times (thus $n$ is equal to $n_{\lambda}$). Every positive integer $n\geq 1$ is associated to exactly one partition into prime parts (with $n = 1$ associated to $\emptyset \in \mathcal P_{\mathbb P}$), and conversely: there is a bijective correspondence between $\mathbb Z^+$ and $\mathcal P_{\mathbb P}$ (Alladi and Erd\H{o}s give an interesting study \cite{alladi1977additive} along these lines). Therefore we see by absolute convergence that 
$$\sum_{n\geq 1}\frac{1}{n^s} = \sum_{\lambda \in \mathcal P_{\mathbb P}} \frac{1}{n_{\lambda}^s}.$$
Equating the left-hand sides of the above two identities gives Euler's product formula \eqref{2}. The series given for $\zeta(s)$ follows immediately from Theorem \eqref{5} with the above definition of $f$.
\end{proof}

\begin{proof}[Proof of Corollary \ref{1.4}]
This is actually a special case of the subsequent Corollary \ref{1.5}, setting $m=2$ in the first equation (see below).
\end{proof}

\begin{proof}[Proof of Corollary \ref{1.5}]
We begin with an identity equivalent to \eqref{3} and its ``$+$" companion:
\begin{align*}
\frac{\pi z}{\sin (\pi z)} &= \frac{1}{\prod_{n=1}^{\infty} \pwr{1- \frac{z^2}{n^2}}}, &\frac{\pi z}{\sinh (\pi z)}& = \frac{1}{\prod_{n=1}^{\infty} \pwr{1+ \frac{z^2}{n^2}}}
\end{align*}
If $\omega_k := e^{2\pi i /k}$, then $\omega_{2k}^2 = \omega_k$ and we have, by multiplying the above two identities, the pair
$$
\frac{\pi^2z^2}{\sin (\pi z)\sinh (\pi z)} = \frac{1}{\prod_{n=1}^{\infty} \pwr{1-\frac{z^4}{n^4}}},\  \  \frac{\omega_4\pi^2z^2}{\sin (\omega_8\pi z)\sinh (\omega_8\pi z)} = \frac{1}{\prod_{n=1}^{\infty} \pwr{1+\frac{z^4}{n^4}}}.
$$
Multiplying these two equations, and repeating this procedure indefinitely, we find identities like
\begin{align*}
\frac{\omega_4\pi^4z^4}{\sin (\pi z)\sinh(\pi z)\sin(\omega_8\pi z)\sinh(\omega_8 \pi z)} &= \frac{1}{\prod_{n=1}^{\infty} \pwr{1 - \frac{z^8}{n^8}}}, 
\end{align*}
\begin{multline*}
\frac{\omega_4^2\pi^8z^8}{\sin(\pi z)\sinh(\pi z) \sin (\omega_8 \pi z)\sinh (\omega_8 \pi z) }\\
\times\frac{1}{\sin(\omega_{16}\pi z) \sinh (\omega_{16}\pi z)\sin (\omega_8\omega_{16}\pi z) \sinh (\omega_8\omega_{16} \pi z)}
\end{multline*}
$$
\quad \quad \quad = \frac{1}{\prod_{n=1}^{\infty} \pwr{1 - \frac{z^{16}}{n^{16}}}},
$$
as well as their ``$+$" companions, and so on. On the other hand, it follows from \eqref{4} that 
$$\frac{1}{\prod_{n=1}^{\infty} \pwr{1 - \frac{zq^n}{n^s}}} = \sum_{\lambda \in \mathcal P}q^{|\lambda|} \prod_{\lambda_i \in \lambda} \frac{z}{\lambda_i^s} = \sum_{\lambda \in \mathcal P}\frac{z^{\ell(\lambda)}q^{|\lambda|}}{n_{\lambda}^s}.$$
Replacing $z$ with $\pm z^{2^t}$ and taking $q = 1$ in the above expression, it is easy to see that we have 
\begin{align*}
\frac{1}{\prod_{n=1}^{\infty} \pwr{1 - \frac{z^{2^t}}{n^{2^t}}}} &= \sum_{\lambda \in \mathcal P}\frac{z^{2^t \ell(\lambda)}}{n_{\lambda}^{2^t}}, &\frac{1}{\prod_{n=1}^{\infty} \pwr{1 + \frac{z^{2^t}}{n^{2^t}}}} &= \sum_{\lambda \in \mathcal P}\frac{(-1)^{\ell(\lambda)}z^{2^t \ell(\lambda)}}{n_{\lambda}^{2^t}}.
\end{align*}
These series have closed forms given by complicated trigonometric and hyperbolic expressions such as the ones above. Setting $z = 1/m$ in such expressions yields the explicit values advertised in the corollary for
\begin{align*}
\frac{1}{\prod_{n=1}^{\infty} \pwr{1 - \frac{1}{m^{2^t}n^{2^t}}}} & = \frac{1}{\prod_{n=1}^{\infty} \pwr{1 - \frac{1}{(mn)^{2^t}}}}\\
& = \frac{1}{\prod_{n\equiv 0  \, (\text{mod }m)} \pwr{1 - \frac{1}{n^{2^t}}}} = \sum_{\lambda \in \mathcal P_{m\mathbb Z}} \frac{1}{n_{\lambda}^{2^t}}.
\end{align*}\end{proof}

\begin{remark}
More generally, let $\mathcal P_{a(m)}$ denote the set of partitions into parts $\equiv a \pmod m$ (so $\mathcal P_{m\mathbb Z}$ is $\mathcal P_{0(m)}$ in this notation). It is clear that if $\lambda \in \mathcal P_{a(m)}$ then $n_{\lambda}^s \equiv a^s \pmod m$, thus we find 
$$\frac{1}{\prod_{n\equiv a \, (\text{mod }m)} (1-n^sq^n)}  = \sum_{\lambda \in \mathcal P_{a(m)}} n_{\lambda}^sq^{|\lambda|} \equiv \frac{1}{(a^sq^a ; q^m)_{\infty}} \pmod m .$$
Of course, these expressions diverge as $q \rra 1$ so $\zeta_{\mathcal P_{a(m)}}(-s)$ does not make sense, but we wonder: do there exist similarly nice relations that involve $\zeta_{\mathcal P_{a(m)}}(s)$ or a related form?
\end{remark}

\begin{proof}[Proof of Corollary \ref{1.6}]
We apply \eqref{4} to the following formula submitted by Ramanujan as a problem to the \textit{Journal of the Indian Mathematical Society}, reprinted as \cite[Question~261]{ramanujan2000collected}:
$$\prod_{n=2}^{\infty} \pwr{1 - \frac{1}{n^3}} = \frac{\cosh \pwr{\frac{1}{2} \pi \sqrt{3}}}{3\pi}.$$
Take $q=1$, $f(n) = 1/n^3$ if $n>1$ and $=0$ otherwise in \eqref{4}. Comparing the result with the above formula gives the corollary.
\end{proof}

\begin{remark}
Ramanujan gives a companion formula $\prod_{n=1}^{\infty} \pwr{1 + \frac{1}{n^3}}=\cosh \pwr{\frac{1}{2}\pi \sqrt{3}}/\pi$ in the same problem \cite{ramanujan2000collected}. Multiplying this infinite product by the one above and using \eqref{4} yields a closed form for $\sum_{\lambda \in \mathcal P_{\geq 2}} 1/n_{\lambda}^6$ as well.
\end{remark}

\begin{proof}[Proof of Corollary \ref{1.7}]
Consider the sequence $\beta_{2k}$ of coefficients of the expansion
\begin{equation}\label{23}
\frac{z}{\sinh z} =  \frac{1}{\prod_{n=1}^{\infty} \pwr{1 + \frac{z^2}{\pi^2n^2}}} = \sum_{k=0}^{\infty} \beta_{2k}z^{2k}.
\end{equation}
From the Maclaurin series for the hyperbolic cosecant and Euler's work relating the zeta function to the Bernoulli numbers, it follows that 
\begin{equation}\label{24}
\beta_{2k} = \frac{4(-1)^k(2^{2k-1} - 1)\zeta(2k)}{(2\pi)^{2k}}.
\end{equation}
On the other hand, from \eqref{4} we have
$$\frac{1}{\prod_{n=1}^{\infty} \pwr{1 + \frac{z^2}{\pi^2n^2}}} = \sum_{\lambda \in \mathcal P} \frac{(-1)^{\ell(\lambda)} z^{2\ell(\lambda)}}{\pi^{2\ell(\lambda)}n_{\lambda}^2} = \sum_{k=0}^{\infty} \frac{(-1)^kz^{2k}}{\pi^{2k}}\sum_{\ell(\lambda) = k} \frac{1}{n_{\lambda}^2},$$
thus 
$$\beta_{2k} = \frac{(-1)^k}{\pi^{2k}}\zeta_{\mathcal P}(\brk{2}^k).$$
The corollary is immediate by comparing the two expressions for $\beta_{2k}$ above. 
\end{proof}

\begin{proof}[Proof of Corollary \ref{1.8}]
Much as in the proof of Corollary \ref{1.7} above, we have from \eqref{3} that 
$$\frac{z}{\sin z} = \sum_{k=0}^{\infty} \frac{z^{2k}}{\pi^{2k}}\sum_{\ell(\lambda) = k} \frac{1}{n_{\lambda}^2} = \sum_{k=0}^{\infty} \alpha_{2k}z^{2k}$$
with 
\begin{equation}\label{25}
\alpha_{2k} = \frac{4(2^{2k-1} - 1)\zeta(2k)}{(2\pi)^{2k}} = (-1)^k\beta_{2k}.
\end{equation}
Using the Cauchy product 
\begin{equation}\label{26}
\pwr{\sum_{k=0}^{\infty} a_kz^k}\pwr{\sum_{k=0}^{\infty} b_kz^k} = \sum_{k=0}^{\infty}z^k \sum_{n=0}^k a_nb_{k-n},
\end{equation}
we see after some arithmetic 
$$\frac{z^2}{\sin z \sinh z} = \pwr{\sum_{k=0}^{\infty} \alpha_{2k}z^{2k}}\pwr{\sum_{k=0}^{\infty} \beta_{2k}z^{2k}} = \sum_{k=0}^{\infty} \gamma_{4k}z^{4k},$$
where 
$$\gamma_{4k} = \sum_{n=0}^{2k}\alpha_{2n}\beta_{4k - 2n},$$
with $\alpha_*,\beta_*$ as in \eqref{25},\eqref{26} respectively. On the other hand, the proof of Corollary \ref{1.5} implies 
$$\frac{z^2}{\sin z \sinh z} = \frac{1}{\prod_{n=1}^{\infty} \pwr{1 - \frac{z^4}{\pi^4n^4}}} = \sum_{k=0}^{\infty} \frac{z^{4k}}{\pi^{4k}}\sum_{\ell(\lambda) = k} \frac{1}{n_{\lambda}^4},$$
thus 
$$\gamma_{4k} = \frac{1}{\pi^{4k}} \zeta_{\mathcal P}(\brk{4}^k).$$
Comparing the two expressions for $\gamma_{4k}$ above, the theorem follows, just as in the previous proof.

We can carry this approach further to find $\zeta_{\mathcal P}(\brk{2^t}^k)$ for $t>2$, much as in the proof of Corollary \ref{1.5}. For instance, to find $\zeta_{\mathcal P}(\brk{8}^k)$ we begin by noting 
\begin{small}
\begin{align*}
\pwr{\sum_{k=0}^{\infty} \frac{z^{4k}}{\pi^{4k}} \zeta_{\mathcal P}(\brk{4}^k)}\pwr{\sum_{k=0}^{\infty} \frac{(-1)^kz^{4k}}{\pi^{4k}} \zeta_{\mathcal P}(\brk{4}^k)} & = \frac{1}{\prod_{n=1}^{\infty} \pwr{1 - \frac{z^4}{\pi^4n^4}}\pwr{1+\frac{z^4}{\pi^4n^4}}}\\
& = \sum_{k=0}^{\infty} \frac{z^{8k}}{\pi^{8k}} \zeta_{\mathcal P}(\brk{8}^k).
\end{align*}
\end{small}
We compare the coefficients on the left-and right-hand sides, using \eqref{26} to compute the coefficients on the left. Likewise, for $\zeta_{\mathcal P}(\brk{16}^k)$ we compare the coefficients on both sides of the equation
$$\pwr{\sum_{k=0}^{\infty} \frac{z^{8k}}{\pi^{8k}} \zeta_{\mathcal P}(\brk{8}^k)}\pwr{\sum_{k=0}^{\infty} \frac{(-1)^kz^{8k}}{\pi^{8k}} \zeta_{\mathcal P}(\brk{8}^k)}  = \sum_{k=0}^{\infty} \frac{z^{16k}}{\pi^{16k}} \zeta_{\mathcal P}(\brk{16}^k),$$
and so on, recursively, to find $\zeta_{\mathcal P}(\brk{2^t}^k)$ as $t$ increases. It is clear from induction that $\zeta_{\mathcal P}(\brk{2^t}^k)$ is of the form ``$\pi^{2^t} \times \text{rational}$'' for all $t \in \mathbb Z^+$.
\end{proof}

\begin{proof}[Proof of Corollary \ref{1.9}]
We have already seen these principles at work in the proofs of Corollaries \ref{1.5} and \ref{1.8}. We have 
$$\pwr{\sum_{\lambda\in \mathcal P_X} \frac{z^{\ell(\lambda)}}{n_{\lambda}^s}} \pwr{\sum_{\lambda \in \mathcal P_X} \frac{(-1)^{\ell(\lambda)z^{\ell(\lambda)}}}{n_{\lambda}^s}} = \frac{1}{\prod_{n\in X} \pwr{1 - \frac{z}{n^s}} \pwr{1 + \frac{z}{n^s}}} = \sum_{\lambda \in \mathcal P_X} \frac{z^{2\ell(\lambda)}}{n_{\lambda}^{2s}}.$$
Letting $z = 1$ gives \eqref{13}. If we replace $z$ with $z^s$ we may rewrite the above equation in the form
$$\pwr{\sum_{k=0}^{\infty} z^{sk}\zeta_{\mathcal P_X}(\brk{s}^k)} \pwr{\sum_{k=0}^{\infty}(-1)^k z^{sk}\zeta_{\mathcal P_X}(\brk{s}^k)} = \sum_{k=0}^{\infty} z^{2sk} \zeta_{\mathcal P_X}(\brk{2s}^k).$$
Using \eqref{26} on the left and comparing coefficients on both sides gives the $n = 0$ case of \eqref{14}; the general formula follows from the $n=0$ case by induction.
\end{proof}

\begin{proof}[Proof of comments following Corollary \ref{1.9}]
Taking $X = \mathbb P$ we see $(-1)^{\ell(\lambda)}$ specializes to Liouville's function $\lambda (n_{\lambda}) = (-1)^{\Omega (n_{\lambda})}$ (here we are using $``\lambda"$ in two different ways), where $\Omega(N)$ is the number of prime factors of $N$ with multiplicity. That \eqref{13} therefore becomes $\zeta(s)\sum_{n=1}^{\infty} \lambda(n)/n^s = \zeta(2s)$ follows from arguments similar to the proof of Corollary \ref{1.3}.
\end{proof}

\begin{proof}[Proof of Corollary \ref{1.10}]
This identity follows immediately by taking $\mathcal P_X = \mathcal P$, $n = 0$, $k=2$ in \eqref{14} and simplifying.
\end{proof}

\begin{proof}[Proof of Theorem \ref{1.11}]
The proof of \eqref{15} is similar to Euler's proof that the number of partitions of $n$ into distinct parts is equal to the number of partitions into odd parts \cite{berndt2006number}. We expand the product
$$\varphi_{\infty}(f;q) = (1-f(1)q)(1-f(2)q^2)(1-f(3)q^3)\cdots ,$$
which results in \eqref{15}.

Identities \eqref{16} and \eqref{17} follow directly from the proof of \eqref{5},\eqref{6} above. Moreover, the proof of \eqref{18} is much like the proof of \eqref{8}. We note that
\begin{align*}
\frac{1}{\varphi_{\infty}(f;q)} &= 1 - \varphi_{\infty}(f;q)\begin{array}{c}
\sum_{(5)}
\end{array} \\
&= 1 - \frac{\sum_{(5)}}{1/\varphi_{\infty}(f;q)},
\end{align*}
and replace the term $1/\varphi_{\infty}(f;q)$ in the denominator on the right with the continued fraction in \eqref{8}.
\end{proof}

\begin{proof}[Proof of Corollary \ref{etaquotient}]
The formula follows easily from the leading identities in Theorems \ref{1.1} and \ref{1.11}. We note that 
\begin{align*}
\prod_{j=1}^{n}\prod_{k_j\in X_j}\left(1\pm f_j(k_j)q^{k_j}\right)^{\pm 1}
& =\prod_{j=1}^{n}\left(\sum_{\lambda\in\mathcal P_{X_j}^{\pm}}q^{|\lambda|}\prod_{\lambda_i\in\lambda}f_j(\lambda_i)\right)
\\
& =\prod_{j=1}^{n}\left(\sum_{k_j=0}^{\infty}q^{k_j}\sum_{\substack{\lambda\vdash k_j\\ \lambda\in\mathcal P_{X_j}^{\pm}}}\prod_{\lambda_i\in\lambda}f_j(\lambda_i)\right)
\end{align*}
and repeatedly apply Equation \ref{26} on the right. 
\end{proof}

\begin{proof}[Proof of Corollary \ref{1.12}]
The identity is immediate from Theorem \ref{1.11} by letting $z=1$ in 
$$\frac{\sinh (\pi z)}{\pi z} = \prod_{n=1}^{\infty} \pwr{1 + \frac{z^2}{n^2}} = \sum_{\lambda \in \mathcal P^*} \frac{z^{\ell(\lambda)}}{n_{\lambda}^2}.$$
\end{proof}

\begin{proof}[Proof of Corollary \ref{1.13}]
This proof proceeds much like the proofs of Corollaries \ref{1.5}, \ref{1.7}, \ref{1.8} above, only more easily. We have from \eqref{3} and Theorem \ref{1.11}, together with the Maclaurin expansion of the sine function, that 
$$\frac{\sin z}{z} = \sum_{k=0}^{\infty} \frac{(-1)^kz^{2k}}{\pi^{2k}}\zeta(\brk{2}^k) = \sum_{k=0}^{\infty} \frac{(-1)^kz^{2k}}{(2k+1)!}.$$
Comparing the coefficients of the two summations above gives $\zeta(\brk{2}^k)$. We carry this approach further to find $\zeta(\brk{2^t}^k)$ for $t > 1$. We proceed inductively from the case above. Take the identity 
$$\pwr{\sum_{k=0}^{\infty} \frac{z^{2^{t-1}k}}{\pi^{2^{t-1}k}}\zeta\left(\brk{2^{t-1}}^k\right)} \pwr{\sum_{k=0}^{\infty} \frac{(-1)^kz^{2^{t-1}k}}{\pi^{2^{t-1}k}}\zeta\left(\brk{2^{t-1}}^k\right)} = \sum_{k=0}^{\infty}\frac{z^{2^tk}}{\pi^{2^tk}} \zeta(\brk{2^t}^k)$$
and compare coefficients on the left- and right-hand sides, using \eqref{26} to compute the coefficients on the left; expressions such as the remaining ones in the statement of the corollary result. It is clear from induction that $\zeta(\brk{2^t}^k)$ always has the form $``\pi^{2^tk} \times \text{finite sum of fractions}"$.
\end{proof}

\begin{proof}[Proof of Corollary \ref{1.14}]
This proof is nearly identical to the proof of Corollary \ref{1.9}. From the associated product representations it is clear that 
$$\pwr{\sum_{\lambda \in \mathcal P_X^*} \frac{z^{\ell(\lambda)}}{n_{\lambda}^s}}\pwr{\sum_{\lambda \in \mathcal P_X^*} \frac{(-1)^{\ell(\lambda)}z^{\ell(\lambda)}}{n_{\lambda}^s}} = \sum_{\lambda \in \mathcal P_X^*} \frac{(-1)^{\ell(\lambda)} z^{2\ell(\lambda)}}{n_{\lambda}^{2s}}.$$
Letting $z = 1$ gives \eqref{24}. If we replace $z$ with $z^s$ we may rewrite the above equation as 
$$\pwr{\sum_{k=0}^{\infty} z^{sk}\zeta_{\mathcal P_X^*}(\brk{s}^k)} \pwr{\sum_{k=0}^{\infty} (-1)^kz^{sk}\zeta_{\mathcal P_X^*}(\brk{s}^k)} = \sum_{k=0}^{\infty} (-1)^kz^{2sk} \zeta_{\mathcal P_X^*}(\brk{2s}^k).$$
Again using \eqref{26} on the left and comparing coefficients on both sides gives the $n=0$ case of \eqref{22}; the general formula follows by induction.
\end{proof}

\begin{proof}[Proof of comments following Corollary \ref{1.14}]
Taking $X = \mathbb P$ in Theorem \ref{1.11} and noting that $\lambda \in \mathcal P_{\mathbb P}^*$ implies $n_{\lambda}$ is squarefree, we see $(-1)^{(\lambda)} = \mu(n_{\lambda})$, where $\mu$ denotes the classical M\"obius function; therefore, we have the identity
$$\sum_{n=1}^{\infty}\frac{\mu(n)}{n^s} = \sum_{\lambda \in \mathcal P_{\mathbb P}^*} \frac{\mu(n_{\lambda})}{n_{\lambda}^s} = \eta_{\mathcal P_{\mathbb P}^*}(s) = \frac{1}{\zeta_{\mathcal P_{\mathbb P}(s)}} = \frac{1}{\zeta(s)}.$$
On the other hand, we have $\zeta_{\mathcal P_{\mathbb P}^*}(s) = \sum_{n \text{ squarefree}} 1/n^s = \sum_{n=1}^{\infty} |\mu(n)|/n^s$.
\end{proof}
\subsection*{Acknowledgments}
I wish to thank the following colleagues of mine for their help in the writing of this paper: Jackson Morrow, for extensive assistance in typesetting and editing; Larry Rolen, for offering editorial comments and references; and Andrew Sills, for historical background related to MacMahon's work. I am also grateful to the anonymous referees, whose suggestions strengthened the piece. Moreover, I would like to express gratitude to my Ph.D. advisor, Ken Ono, for his interest and insight during the course of this study. 


\bibliographystyle{bmc-mathphys} 
\bibliography{Partition_zeta_functions.bib}       



\end{document}